\long\def\symbolfootnote[#1]#2{\begingroup%
\def\thefootnote{\fnsymbol{footnote}}\footnote[#1]{#2}\endgroup}
\newtheorem{Theorem}{Theorem}[section]
\newtheorem{Lemma}[Theorem]{Lemma}
\newtheorem{Corollary}[Theorem]{Corollary}
\newtheorem{Proposition}[Theorem]{Proposition}
\newtheorem{Remark}[Theorem]{Remark}
\newtheorem{Example}[Theorem]{Example}
\newtheorem{Definition}[Theorem]{Definition}
\def\F{\mathcal{F}}
\def\FF{\mbox{\rm F}}
\def\G{{\mathrm G}}
\def\m{\mathfrak{m}}
\def\PP{\mbox{\rm P}}
\def\p{\mathfrak{p}}
\def\q{\mathfrak{q}}
\def\r{\mathrm{r}}
\def\R{{\mathcal R}}
\def\S{{\mathrm S}}
\def\AN{\mbox{\rm AN}}
\def\ann{\mbox{\rm ann}}
\def\core{\mbox{\rm core}}
\def\depth{\mbox{\rm depth}}
\def\ds{\displaystyle}
\def\Ext{\mbox{\rm Ext}}
\def\grade{\mbox{\rm grade}}
\def\h{\mbox{\rm ht}}
\def\Hom{\mbox{\rm Hom}}
\def\lar{\longrightarrow}
\def\Min{\mbox{\rm Min}}
\def\pdim{\mbox{\rm pdim}}
\def\rank{\mbox{\rm rank}}
\def\rar{\rightarrow}
\def\Spec{\mbox{\rm Spec}}
\def\Supp{\mbox{\rm Supp}}
\def\Sym{\mathcal{S}}
\newcommand{\rees}[1]{\ensuremath{\mathcal{R}(#1)}}
\begin{document}

\title{Residual Intersections and Core of Modules}

\author{A. Costantini}
\address{Department of Mathematics, Oklahoma State University, 401 MSCS, Stillwater, OK 74078, USA }
\email{alecost@okstate.edu }

\author{L. Fouli}
\address{Department of Mathematics, New Mexico State University, PO Box 30001, Las Cruces, NM 88003-8001, USA }
\email{lfouli@nmsu.edu }

\author{J. Hong}
\address{Department of Mathematics, Southern Connecticut State
University, 501 Crescent Street, New Haven, CT 06515-1533, U.S.A.} 
\email{hongj2@southernct.edu}

\thanks{AMS 2020 {\em Mathematics Subject Classification}.
Primary 13H10;  Secondary 13A30, 13C13, 13D07 \\
{\bf  Key Words and Phrases:} reductions, residual intersections, and core.   \\
The third author was partially supported by the Sabbatical Leave Program at Southern Connecticut State University (Spring 2022).  }

\begin{abstract}
We introduce the notion of residual intersections of modules and prove their existence. 
We show that projective dimension one modules have Cohen-Macaulay residual intersections, namely they satisfy the relevant Artin-Nagata property. We then establish a formula for the core of orientable modules satisfying certain homological conditions, extending previous results of Corso, Polini, and Ulrich on the core of projective one modules. Finally, we provide examples of classes of modules that satisfy our assumptions.
\end{abstract}

\maketitle

\section{Introduction}

The main goal of this article is to describe the {\em core} of a module, namely the (possibly {\em infinite}) intersection of all minimal reductions of the module. This notion generalizes the core of an ideal, which is defined as the intersection of all minimal reductions of the ideal. 
Minimal reductions of an ideal $I$ and its core have been studied by several authors using various techniques \cite{{CPU01},{CPU02},{FM12},{HSwanson95},{Lipman94},{RS88}}.  In particular, Corso, Polini, and Ulrich \cite{{CPU01},{CPU02}} established a fundamental relationship between the structure of the core of   $I$ and the properties of the residual intersections of $I$. Generalizing the notion of linkage, the residual intersections of $I$ are the ideals of the form $(J :_{R} I)$ for some ideal $J \subseteq I$ that equals $I$ locally up to a certain codimension. Hence, one can think of the residual intersections of $I$ as local approximations of $I$.  

\medskip

The structure of the core is well-understood for ideals whose residual intersections satisfy Serre's condition $(\S_{2})$. In fact, Corso, Polini, and Ulrich proved that the core of $I$ is a {\em finite} intersection of general minimal reductions of $I$ if $I$ is $\G_{\ell(I)}$ and universally weakly $(\ell(I)-1)$-residually $(\S_{2})$, where $\ell(I)$ is the analytic spread of $I$, \cite[Theorem~4.5]{CPU01}. They also gave an explicit formula to calculate the core for ideals with bounded reduction number in a slightly more restrictive setting \cite{CPU02}.

\begin{Theorem} \cite[Theorem~2.6]{CPU02} \label{TheoremCPU02}
   Let $R$ be a Gorenstein local ring of dimension $d$ with infinite residue field. Let $I$ be an ideal with height $g \ge 2$ and analytic spread $\ell$, satisfying $\G_{\ell}$. 
Assume that $\depth(I^j) \ge d - g - j +2$ for $1 \le j \le \ell - g +1$.
   The following are equivalent{\rm \,:}
   \begin{enumerate}[{\rm (i)}]
     \item $\core(I) = (J :_R I)I = (J :_R I )J$ for every minimal reduction $J$ of $I${\rm ;} 
      \item $(J :_R I)$ does not depend on the minimal reduction $J$ of $I${\rm ;}
      \item The reduction number of $I$ is at most  $ \ell - g +1$.
   \end{enumerate}
\end{Theorem}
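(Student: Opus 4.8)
The plan is to reduce the whole equivalence to a study of the residual intersection $K_J := (J :_R I)$ attached to a minimal reduction $J$ of $I$. Since the residue field is infinite, every minimal reduction $J$ is minimally generated by $\ell = \ell(I)$ elements, so $J$ is an $\ell$-generated ideal contained in $I$ with $\h(J :_R I)\ge \ell$, i.e.\ $J$ is an $\ell$-residual intersection of $I$. The hypotheses $\G_\ell$ and $\depth(I^j)\ge d-g-j+2$ for $1\le j\le \ell-g+1$ are precisely what is needed to feed the Artin--Nagata theory of residual intersections, and I would first isolate the consequences that all three implications pass through: $R/K_J$ is Cohen--Macaulay of dimension $d-\ell$; $IK_J=JK_J$ (hence $I^tK_J=J^tK_J$ for all $t$); and, because $R$ is Gorenstein, there is a canonical isomorphism expressing $\omega_{R/(I+K_J)}$ in terms of $K_J$ (the duality for residual intersections). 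In this setting $I$ is $\G_\ell$ and universally weakly $(\ell-1)$-residually $(\S_2)$, so the Corso--Polini--Ulrich description of the core is also available.

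For (iii) $\Rightarrow$ (i): assume $r(I)\le \ell-g+1=:b$, so that $I^{b+1}=JI^{b}$ for a general minimal reduction $J$. The first ingredient is the containment $\core(I)\subseteq K_J I$ for general $J$, which is the residual-intersection bound on the core coming from the hypotheses recalled above. The second, and main, ingredient is that $K_J I$ is \emph{independent} of $J$: here I would compare two minimal reductions $J_1,J_2$ inside $J_1+J_2$, using $I^{b+1}=(J_1+J_2)I^{b}$ together with the identities $IK_{J_i}=J_iK_{J_i}$ and the Gorenstein duality to force $K_{J_1}I=K_{J_2}I$; the same identities, applied to a single $J$, also give $K_JI=K_JJ$. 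Combining, $\core(I)=K_JI=K_JJ$ first for general $J$, and then for \emph{every} minimal reduction $J$ — the upgrade follows because $b$ bounds $r_{J'}(I)$ for all $J'$ in this setting (so the argument reruns verbatim), or alternatively by an upper-semicontinuity/specialization argument.

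The implication (i) $\Rightarrow$ (ii) is essentially formal: if $\core(I)=K_JI$ for every minimal reduction $J$, then for any $x\in K_J$ we have $xI\subseteq K_JI=\core(I)\subseteq J'$ for every minimal reduction $J'$, hence $x\in K_{J'}$; symmetrically $K_{J'}\subseteq K_J$, so $K_J=K_{J'}=\core(I):_RI$, which is patently independent of $J$. For (ii) $\Rightarrow$ (iii) I would argue by contraposition: assuming $r(I)\ge \ell-g+2$, pick a minimal reduction $J$ whose reduction number exceeds $b$ and, using the Cohen--Macaulayness of $R/K_J$ and the Gorenstein duality, show that $J\subsetneq K_J\cap I$, the obstruction to equality being a nonzero socle-type module that detects $I^{b+1}\ne JI^{b}$. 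Comparing this $J$ against a general minimal reduction $J'$, for which $K_{J'}$ has the ``expected'' structure, then produces $K_J\ne K_{J'}$, contradicting (ii).

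I expect the decisive obstacle to be the independence statement inside (iii) $\Rightarrow$ (i): collapsing the products $K_JI$ — a priori varying with $J$, and a priori equal to $\core(I)$ only after an infinite intersection over general $J$ — to a single ideal requires extracting from the reduction-number bound exactly the right relations among the $I^{t}$, $J^{t}$ and $K_J$, and this is where the Gorenstein duality for residual intersections and a careful length bookkeeping have to do the work. The converse (ii) $\Rightarrow$ (iii) is the secondary difficulty, since there one must manufacture two genuinely distinct residual intersections, which demands a quantitative grip on how $K_J$ varies as $J$ degenerates away from a general minimal reduction.
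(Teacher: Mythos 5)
This theorem is cited from \cite[Theorem~2.6]{CPU02} and is \emph{not proved anywhere in the present paper}: it appears in the introduction as background, and the only internal hint about its proof is the closing paragraph of Section~\ref{CoreM}, which says the argument ``ultimately relies on a thorough study of the canonical module of the associated graded ring of $I$ (see \cite[Corollary~2.4 and Corollary~2.6]{U96} and the proof of \cite[Theorem~2.6]{CPU02}).'' So there is no in-paper proof to compare against; I can only measure your sketch against what that reference actually does.

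Your (i)~$\Rightarrow$~(ii) step is correct and essentially identical to the formal colon argument used in the paper's own Theorem~\ref{core} ((ii)~$\Rightarrow$~(iii) there). The difficulties are concentrated in the other two implications, and there your sketch names the wrong tools. For (iii)~$\Rightarrow$~(i), the containment $\core(I)\subseteq K_J I$ for general $J$, which you describe as ``the residual-intersection bound on the core coming from the hypotheses,'' is in fact the main content: it is not a formal consequence of $K_J$ being an $\ell$-residual intersection, but comes from \cite[Theorem~4.5]{CPU01} (under $\G_\ell$ and weak residual $(\S_2)$, $\core(I)$ is a \emph{finite} intersection of \emph{general} minimal reductions) together with the freeness isomorphism $J/K_J J\simeq (R/K_J)^\ell$ of \cite[Lemma~2.5]{CPU02} (recovered in rank one by the paper's Proposition~\ref{free}), which is what collapses that finite intersection to a single $K_J J$. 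Your alternative plan of comparing $J_1$ and $J_2$ inside $J_1+J_2$ does not fit the residual-intersection machinery: $J_1+J_2$ is typically generated by far more than $\ell$ elements, so $(J_1+J_2 :_R I)$ is not an $\ell$-residual intersection and none of the Cohen--Macaulayness or duality statements apply to it. Also, $\h(J:_R I)\ge\ell$ is not a consequence of $\G_\ell$ alone, as your first paragraph suggests, but requires that $I$ be of linear type locally in codimension $\le\ell-1$, which is where the $\depth(I^j)$ hypothesis (giving the Artin--Nagata condition via \cite[Theorem~2.9]{U94}) enters.

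The gap in (ii)~$\Rightarrow$~(iii) is more serious. You propose contraposition: assume $r(I)\ge\ell-g+2$, pick a minimal reduction $J$ with $r_J(I)>b$, and contrast it with a ``general $J'$ for which $K_{J'}$ has the expected structure.'' But $r(I)$ is the \emph{minimum} of $r_J(I)$ over all minimal reductions, so if $r(I)>b$ then \emph{every} minimal reduction has $r_J(I)>b$ and there is no general $J'$ with the expected small reduction number to compare against; the dichotomy your argument relies on cannot be set up. The actual route (following \cite{U96}) is direct, not by contraposition: under the stated Artin--Nagata hypotheses, balancedness forces the graded canonical module $\omega_{\mathrm{gr}_I(R)}$ to have the expected form, and the reduction-number bound is read off from that form via \cite[Corollaries~2.4, 2.6]{U96}. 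Your invocations of ``Gorenstein duality for residual intersections'' and ``a nonzero socle-type module'' gesture in the right general direction, but the specific object carrying the argument is $\omega_{\mathrm{gr}_I(R)}$, and without engaging with it neither hard implication can be completed.
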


The condition that $(J:_RI)$ does not depend on the choice of minimal reduction $J$ of $I$ is also known as the {\em balanced} condition, \cite{U96}. The assumption on the depth of the  powers of $I$ in Theorem~\ref{TheoremCPU02} is satisfied if $I$ is a strongly Cohen-Macaulay ideal \cite[Remark~2.10]{U94}, or an ideal in the linkage class of a complete intersection such as a perfect ideal of height two \cite{Huneke82}. This assumption implies that $I$ satisfies the {\em Artin-Nagata property} $\AN_{\ell(I)}$, a Cohen-Macaulayness condition on the residual intersections of $I$ \cite[Theorem~2.9]{U94}. Furthermore, if $I$ is a perfect ideal of height two, the equivalent conditions in Theorem~\ref{TheoremCPU02} are also equivalent to the Cohen-Macaulay property of the Rees algebra $\R(I)$ \cite[Corollary~5.3]{U96}, \cite[Corollary~3.4]{CPU02}.  Exploiting this Cohen-Macaulay property, Corso, Polini and Ulrich extended the latter result to modules of projective dimension one in \cite{CPU03}.

\begin{Theorem}\cite[Theorem~2.4]{CPU03} \label{TheoremCPU03}  
Let $R$ be a Gorenstein local ring of dimension $d$ with infinite residue field. Let $E$ be a finitely generated $R$-module with ${\rm{projdim }}E=1$. Let $e=\rank (E)$, $\ell=\ell(E)$, and assume that $E$ satisfies $G_{\ell-e+1}$ and is torsionfree locally in codimension $1$. The following are equivalent{\rm \,:}
 \begin{enumerate}[{\rm (a)}]
\item $(U:_RE)E \subseteq  \core (E)$ for some minimal reduction $U$ of $E$;
\item $(U:_RE)U=(U:_RE)E=\core (E)$ for every minimal reduction $U$ of $E$;
\item $\core(E)={\rm{Fitt}}_{\ell}(E)\cdot E$;
\item $(U:_RE)$ does not depend on the minimal reduction $U$ of $E$;
\item $(U:_RE)={\rm{Fitt}}_{\ell}(E)$ for every minimal reduction $U$ of $E$;
\item the reduction number of $E$ is at most $\ell-e$. 
\end{enumerate}
\end{Theorem}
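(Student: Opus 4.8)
The plan is to reduce the statement to the case of a perfect ideal of height two by means of the \emph{generic Bourbaki ideal} of Simis--Ulrich--Vasconcelos, and then to apply Theorem~\ref{TheoremCPU02} together with its refinement for perfect ideals of height two recalled above, which adjoins to the list of equivalent conditions the equalities $\core(I)={\rm Fitt}_{\ell(I)}(I)\cdot I$ and $(\mathcal U:I)={\rm Fitt}_{\ell(I)}(I)$ and the Cohen--Macaulayness of $\R(I)$. The point is that a module with $\mathrm{projdim}\,E=1$ has a generic Bourbaki ideal that is forced to be perfect of height exactly two, so that statement (a)--(f) for $E$ is literally the translation of the corresponding statement for the ideal along the Bourbaki exact sequence.

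First I would build the dictionary. Adjoining a suitable family $\mathbf t$ of indeterminates, set $R'=R(\mathbf t)=R[\mathbf t]_{\m R[\mathbf t]}$; this is again a Gorenstein local ring of dimension $d$ with infinite residue field, faithfully flat over $R$. Choosing $e-1$ generic elements of $E\otimes_RR'$ produces a free submodule $F\cong(R')^{e-1}$ such that $I:=(E\otimes_RR')/F$ is isomorphic to an ideal of $R'$. The facts I need, all part of the Simis--Ulrich--Vasconcelos construction, are: $\mathrm{projdim}_{R'}I\le 1$; since $E$ is torsionfree locally in codimension one, $\h I\ge 2$, and combined with $\mathrm{projdim}_{R'}I\le 1$ (note $E$ is not free, as $\mathrm{projdim}\,E=1$) this forces $I$ to be a perfect ideal of height exactly two; $\ell(I)=\ell-e+1$; $I$ satisfies $G_{\ell(I)}$ because $E$ satisfies $G_{\ell-e+1}$; ${\rm Fitt}_\ell(E)R'={\rm Fitt}_{\ell(I)}(I)$; a minimal reduction $U$ of $E$ induces, after $-\otimes_RR'$ and passage modulo $F$, a minimal reduction $\mathcal U$ of $I$ with $(U:_RE)R'=(\mathcal U:_{R'}I)$, compatibly with the products by $E$ and by $U$, and under the $G_{\ell(I)}$ hypothesis this correspondence matches up minimal reductions on the two sides; and the (general) reduction numbers agree, so $\r(E)=\r(I)$.

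Next I would transfer the core and read off the six conditions. Because only generic indeterminates have been adjoined, forming the core commutes with the extension $R\to R'$, so $\core(E)R'=\core(E\otimes_RR')$; moreover a general minimal reduction of $E\otimes_RR'$ may be chosen to contain the generic free module $F$, and minimal reductions of $I$ lift to minimal reductions of $E\otimes_RR'$ containing $F$, so the exact sequence $0\to F\to E\otimes_RR'\to I\to 0$ gives $\core(E\otimes_RR')/F=\core(I)$. Combining this with the dictionary, applying $-\otimes_RR'$ and $-/F$ carries (a) for $E$ to $(\mathcal U:_{R'}I)I\subseteq\core(I)$, (b) to $(\mathcal U:_{R'}I)\mathcal U=(\mathcal U:_{R'}I)I=\core(I)$ for every $\mathcal U$, (c) to $\core(I)={\rm Fitt}_{\ell(I)}(I)I$, (d) to independence of $(\mathcal U:_{R'}I)$ from $\mathcal U$, (e) to $(\mathcal U:_{R'}I)={\rm Fitt}_{\ell(I)}(I)$ for every $\mathcal U$, and (f) to $\r(I)\le\ell(I)-1$, each translation being an equivalence. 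Since $R'$ is Gorenstein local with infinite residue field and $I$ is perfect of height $g=2$, the hypothesis $\depth(I^j)\ge d-g-j+2$ of Theorem~\ref{TheoremCPU02} holds automatically (a perfect ideal of height two is strongly Cohen--Macaulay, by Huneke), and $I$ satisfies $G_{\ell(I)}$; hence Theorem~\ref{TheoremCPU02}, in its refined form for perfect ideals of height two, shows the six translated statements are all equivalent, and pulling back through the dictionary yields the equivalence of (a)--(f).

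The main obstacle is the behavior of the core under the generic Bourbaki construction, i.e.\ the two inputs at the start of the third paragraph: that adjoining generic indeterminates does not change the core — so that the notion of ``general minimal reduction'' matches on the two sides, $\core$ being an intersection of general minimal reductions — and that a general minimal reduction of $E\otimes_RR'$ can be taken to contain the generic free submodule $F$ while minimal reductions of $I$ lift to such, with the corresponding colon ideals and products $(\mathcal U:I)I$, $(\mathcal U:I)\mathcal U$ matching those for $E$ modulo $F$. Once these are in hand the rest is routine: the identification ${\rm Fitt}_\ell(E)R'={\rm Fitt}_{\ell(I)}(I)$ is a determinantal computation built into the Bourbaki setup, equality of reduction numbers is standard, and strong Cohen--Macaulayness of a perfect height-two ideal is classical. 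One could sidestep the machinery for the trivial implications — for instance (b)$\Rightarrow$(a) and (e)$\Rightarrow$(d) are immediate — but controlling $\core(E)$ directly over $R$ appears to require essentially the same input.
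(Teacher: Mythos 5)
The paper does not prove this statement; it is cited from \cite[Theorem~2.4]{CPU03}. However, the paper's discussion in Sections~\ref{SectionAN} and \ref{CoreM} addresses precisely the strategy you propose and explains why it does not work. Your reduction to a perfect height-two ideal via a \emph{generic} Bourbaki ideal over $R'=R(\mathbf t)$ rests on two claims that you yourself flag as the ``main obstacle'': that $\core(E)R'=\core(E\otimes_RR')$, and that $\core(E\otimes_RR')/F=\core(I)$ for the generic free submodule $F$. Neither is established in your proposal, and the paper states flatly that ``a generic extension of the ring is not expected to preserve the core''; this is exactly why Corso, Polini and Ulrich in \cite{CPU03}, and the present paper in Proposition~\ref{free} and Theorem~\ref{core}, reduce the rank by going modulo a single \emph{general} element $x$ of a fixed minimal reduction $U$ over the original ring $R$, rather than adjoining generic indeterminates and quotienting by a generic free submodule. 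The inclusion $\core(E\otimes_RR')\subseteq\core(E)R'$ is clear, but the reverse is not, since the extension creates new reductions; and $\core(E\otimes_RR')$ is an intersection over \emph{all} minimal reductions, not merely those containing $F$, so the exact sequence $0\to F\to E\otimes_RR'\to I\to 0$ does not by itself give $\core(E\otimes_RR')/F=\core(I)$.

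The actual argument, which the paper sketches and parallels in its own Theorem~\ref{core}, is an induction on $e=\rank(E)$. For $e=1$, $E$ is a perfect height-two ideal and the claim is \cite[Theorem~2.6 and Corollary~3.4]{CPU02}. For $e\ge 2$ one takes a general element $x\in U$, sets $\widetilde E=E/Rx$ and $\widetilde U=U/Rx$, uses \cite[Lemma~2.2]{CPU03} to check the hypotheses descend (the key input being $\h(U:_RE)\ge\ell-e+1$, which for $\mathrm{projdim}\,E=1$ follows from $E$ being of linear type locally in codimension at most $\ell-e$, as in Proposition~\ref{llt}(1)), and invokes \cite[Lemma~2.2(c)]{CPU03} to identify $\core(\widetilde E)$ with $\bigcap_{x\in V}\widetilde V$ over reductions $V$ of $E$, which is what lets one pass information about $\core(\widetilde E)$ back to $\core(E)$. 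Your dictionary for the Fitting ideal, the perfection and height of the Bourbaki ideal, analytic spread, and reduction number is essentially correct, and the appeal to strong Cohen--Macaulayness of a perfect height-two ideal is sound; but absent a proof that the core survives the generic Bourbaki construction, the proposal has a real gap at the one step where it matters.
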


If we assume that the module $E$ in the above result has rank one, then $E$ is isomorphic to a  perfect ideal of height two, recovering a special case of Theorem~\ref{TheoremCPU02}. Nonetheless, Theorem~\ref{TheoremCPU02} holds for larger classes of ideals and it is then natural to ask whether Theorem~\ref{TheoremCPU03} can be extended to other classes of modules. 

\medskip

Let us recall some basic definitions and notations used throughout this article. Let $R$ be a Noetherian ring and $E$ a finitely generated $R$--module. We say that $E$ has a {\em rank}, $\mathrm{rank}(E) = e$, if $E \otimes_R \mathrm{Quot(R)}$ is a finite free module of rank $e$ over the total ring of quotients $\mathrm{Quot(R)}$. This condition is not very restrictive. For instance, all finitely generated modules over an integral domain and all modules of finite projective dimension over any Noetherian ring have a rank. Moreover, ideals of positive grade are identified with torsionfree modules of rank one.   For an integer $s \ge 1$, a finitely generated $R$--module $E$ having a rank $e$ is said to satisfy $\G_{s}$ if ${\ds \mu(E_{\p}) \le \dim(R_{\p}) + e-1}$ whenever ${\ds 1 \le \dim(R_{\p}) \le s-1}$, where $\mu(-)$ denotes the minimal number of generators. One says that $E$ satisfies $\G_{\infty}$ if it satisfies $\G_{s}$ for all $s$. 

\medskip

 If $E$ is a finitely generated module with a rank, then the {\em Rees algebra}  $\R(E)$ of $E$ is defined as the symmetric algebra $\Sym(E)$ modulo its $R$--torsion submodule (see \cite{EHU03} for more general definition of Rees algebras of modules that holds for modules without a rank as well). If $\Sym(E) = \R(E)$, then $E$ is said to be of {\em linear type} since the symmetric algebra can be equivalently described as a quotient of a polynomial ring modulo an ideal generated by homogeneous linear polynomials. 
 
 \medskip

 A submodule $U$ of $E$ is called a {\em reduction} of $E$ if $\R(E)$ is integral over the $R$--subalgebra generated by the image of $U$.  A reduction of $E$ is said to be {\em minimal} if it is minimal with respect to inclusion.  The {\em core} of a module $E$ is the intersection of all (minimal) reductions of $E$ and denoted by $\core(E)$. Let $U$ be a reduction of $E$.  The {\em reduction number} $\r_{U}(E)$ of $E$ relative to $U$ is the least integer $r \ge 0$ such that ${\ds \R(E)_{n+1}= U \cdot \R(E)_{n} }$ for all $n \ge r$. The {\em reduction number} $\r(E)$ is the minimum of $\r_{U}(E)$, where $U$ ranges over all minimal reductions of $E$.
 
\medskip

Let $(R, \m)$ be a Noetherian local ring with residue field $k$. The Krull dimension of the {\em special fiber ring} ${\ds \F(E) = k \otimes_{R} \R(E)}$ is called the {\em analytic spread} of $E$ and is denoted by $\ell(E)$.
If $k$ is infinite,  $\ell(E) =\mu(U)$ for any minimal reduction $U$ of $E$. Moreover, 
$\ell(E) \le \mu(E)$, and equality holds if and only if $E$ has no proper reductions. 

\medskip

We now describe how this article is organized. In Section~\ref{SectionResInt}, we introduce the notion of {\em residual intersections of modules} and prove their existence under the $G_s$ assumption, generalizing the case of ideals satisfying $G_s$ \cite{U94}.
In Section~\ref{SectionAN}, we show that a module $E$ of projective dimension $1$ satisfies the Artin-Nagata property $\AN_{s}$ for every $s$. This would explain the formula for $\core(E)$ given by Theorem~\ref{TheoremCPU03} in terms of residual intersections of $E$, similarly as for perfect ideals of height two. In Section~\ref{CoreM}, we extend Theorem~\ref{TheoremCPU02} and  Theorem~\ref{TheoremCPU03} to orientable modules satisfying certain homological conditions.  More specifically, our main result is the following. 

\medskip

\noindent {\bf Theorem~\ref{core}.}{\it \; Let $R$ be a Gorenstein local ring of dimension $d$ with infinite residue field. Let $E$ be a finite, torsionfree, and orientable $R$-module with $\rank(E)=e>0$ and analytic spread $\ell(E)=\ell \ge e+2$. Assume that $E$ is $\G_{\ell-e+1}$ and that ${\ds \Ext_{R}^{\,j+1}(E^j, R)= 0}$ for every ${\ds 1 \le j \le \ell-e-1}$. Suppose that the Rees algebra $\R(E)$ satisfies $(\S_{2})$.
The following are equivalent{\rm \,:}
\begin{enumerate}[{\rm (i)}]
\item $(U:_RE)E \subseteq \core(E)$ for every minimal reduction $U$ of $E${\rm \,;}
\item $(U:_RE)U=(U:_RE)E=\core(E)$ for every minimal reduction  $U$ of $E${\rm \,;}
\item$(U:_RE)$ is independent of $U$ for every minimal reduction $U$ of $E$.
\end{enumerate}}

\noindent As applications of our main result, we list several classes of modules to which Theorem~\ref{core} can be applied; see Theorem~\ref{balancedI} and Corollaries~\ref{scmh2},~\ref{h2depthcase},~\ref{ModCorWithdepth}.

\medskip

\section{Residual Intersections of Modules}\label{SectionResInt}

In this section, we introduce residual intersections of modules and discuss their existence. This work generalizes previous work of Ulrich, \cite{U94} on the existence of residual intersections for ideals. 

\begin{Definition}\label{resmod}{\rm
Let $R$ be a Noetherian ring, $E$ a finitely generated $R$--module having a rank $e>0$, and $s$ an integer with $s \ge e$.
 A proper $R$--ideal $K$ is called an {\em $s$--residual intersection} of $E$ if there exists an $s$--generated  submodule $U \subseteq E$ such that ${\ds K=(U:_{R} E)}$ and ${\ds \h(K) \ge s-e+1}$, where $\h(-)$ denotes the height of an ideal. If $E$ is an $R$--ideal $I$ of positive grade, then we assume that $s \ge \h(I)$.
}\end{Definition}

Similarly as in the case of ideals \cite[Corollary~1.6]{U94}, the $\G_{s}$ condition  plays a crucial role in proving that residual intersections of modules exist.

\medskip

\begin{Theorem}\label{exresmod}
Let $(R, \m)$ be a Noetherian local ring with infinite residue class field. 
Let $\mathcal{P}$ be a finite {\rm(}possibly empty{\rm)} subset of $\Spec(R)$.
Let  $E$ be a finitely generated  $R$--module having a rank $e>0$.
Let $s$ be an integer.
Let $W$ be a submodule of $E$ such that  ${\ds \h( W :_{R} E) \ge s}$. Suppose that $E$ satisfies $\G_{s}$.Then there exist elements ${\ds a_{1}, \ldots, a_{s}}$ in $W$ such that for every $0 \le i \le s$, and every subset ${\ds \{ v_{1}, \ldots, v_{i} \} }$ of ${\ds  \{1, \ldots, s \}}$,
 \begin{enumerate}[{\rm (1)}]
 \item ${\ds \mu \left( \left( W/ \sum_{j=1}^{s} Ra_{j} \right)_{\p} \right) = \max\{0, \, \mu(W_{\p}) -s \} }$ whenever ${\ds \p \in \mathcal{P}}${\rm ;}
 \item ${\ds \h \left(  \sum_{j=1}^{i} Ra_{v_{j}} :_{R} E \right)  \ge i-e+1 }$.
 \end{enumerate}
\end{Theorem}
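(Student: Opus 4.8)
The plan is to mimic the classical construction of generic elements for residual intersections of ideals (as in \cite[Corollary~1.6]{U94}), working in the Rees algebra or, more elementarily, in the module $W$ itself. First I would set up the induction: I build the elements $a_1, \ldots, a_s$ one at a time, and at stage $i$ I will have chosen $a_1, \ldots, a_i$ so that the analogue of (1) and (2) holds for all subsets of $\{1, \ldots, i\}$; then I choose $a_{i+1}$ to preserve these properties and establish them for subsets involving the index $i+1$. Since the residue field is infinite, the key tool is that a ``sufficiently general'' element of $W$ — meaning one whose image in $W \otimes k(\p)$ avoids finitely many proper subspaces, for finitely many primes $\p$ — will simultaneously satisfy all the required genericity conditions; the infiniteness of $k$ guarantees such an element exists because $W$ is finitely generated and we only ever impose finitely many linear conditions over finitely many residue fields at once.

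Second, for condition (1): I need $a_{i+1}$ to be a minimal generator of $W_\p$ (equivalently, nonzero in $(W/\m_\p W)_\p$) for each $\p \in \mathcal P$ with $\mu(W_\p) > i$, while imposing no constraint when $\mu(W_\p) \le i$. This is the standard ``prime avoidance for modules'' argument: the set of elements of $W$ that fail to be part of a minimal generating set mod $\p$ is the preimage of a proper subspace of the $k(\p)$-vector space $W \otimes k(\p)$, and a general element avoids all such subspaces for $\p$ ranging over the finite set $\mathcal P$. Bookkeeping then gives $\mu((W/\sum_{j\le i} Ra_j)_\p) = \max\{0, \mu(W_\p) - i\}$ by descending induction on the number of generators already peeled off.

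Third, and this is where the $\G_s$ hypothesis enters and where the main obstacle lies: for condition (2) I must control the heights $\h(\sum_{j=1}^i Ra_{v_j} :_R E)$. The idea is that at each prime $\p$ with $\dim R_\p \le s-1$, the $\G_s$ condition bounds $\mu(E_\p) \le \dim R_\p + e - 1$, so $E_\p$ is ``close enough'' to free that general elements of $W_\p$ behave like a regular sequence relative to $E$. Concretely, I would argue that for a general choice of $a_{i+1}$, the locus where $\h(\sum_{j=1}^{i+1} Ra_{v_j} :_R E)$ drops below $i+1-e+1 = i-e+2$ is avoided: one localizes at the (finitely many) primes $\p$ that are minimal over $(\sum_{j \le i} Ra_{v_j} :_R E)$ of the critical dimension, uses $\G_s$ together with the inductive height bound to see that $W_\p \not\subseteq \sum_{j \le i} (R a_{v_j})_\p + (\text{annihilator-type submodule})$, and picks $a_{i+1}$ whose image at each such $\p$ pushes the colon ideal up by one in height. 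This is essentially the module version of the Artin--Nagata generic element argument; the subtlety is translating ``$(U:_R E)$'' statements into statements about the finitely many minimal primes of the relevant colon ideals and verifying that the $\G_s$ bound on $\mu(E_\p)$ is exactly what is needed to make room for the general element. I would handle it by passing to $\mathrm{Quot}(R_\p)$-ranks and a dimension count: at a prime $\p$ where the colon height is too small, $E_\p$ requires at most $\dim R_\p + e - 1$ generators, the $i$ already-chosen elements span a submodule of rank $\le i$ generically, and since $\dim R_\p < i - e + 2$ forces $\dim R_\p + e - 1 \le i$, a general $a_{i+1}$ can be chosen not to lie in the ``bad'' submodule, contradicting minimality of $\p$. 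Finally, I would note that adding the finitely many primes of $\mathcal P$ to the finitely many critical primes for (2) at each stage keeps the total number of avoidance conditions finite, so the infinite residue field lets all choices be made simultaneously, completing the induction.
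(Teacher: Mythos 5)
Your overall strategy matches the paper's: induct on the number of elements chosen, use the infiniteness of the residue field to pick each new element ``general'' with respect to a finite set of primes, and peel off one minimal generator at each prime in the relevant finite set (this is exactly what the paper does with \cite[Lemma~1.3]{U94}). Your treatment of condition (1) is correct and is the standard argument. The gap is in your treatment of condition (2).

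The problem is that you induct only on the \emph{heights of the colon ideals} $\h\left(\sum_{j\le i} Ra_{v_j}:_R E\right)$, together with a $\mathrm{Quot}(R_\p)$-rank count, and this is not enough information for the induction to close. Suppose at stage $i+1$ you look at a prime $\p$ of height $i-e+1$ containing $\sum_{j\le i} Ra_{v_j}:_R E$; you want a general $a_{i+1}$ to remove $\p$, i.e.\ to get $(\sum_{j\le i} Ra_{v_j})_\p + R_\p a_{i+1} = E_\p$. The $\G_s$ hypothesis gives $\mu(E_\p)\le \dim R_\p + e - 1 = i$, and you assert that the ``$i$ already-chosen elements span a submodule of rank $\le i$ generically'' so one more general element finishes the job. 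But generic rank (dimension over $\mathrm{Quot}(R_\p)$) is the wrong invariant: what you actually need is that the images of $a_{v_1},\dots,a_{v_i}$ span a subspace of dimension $i$ in the $k(\p)$-vector space $E_\p\otimes k(\p)$, equivalently $\mu\left((E/\sum Ra_{v_j})_\p\right)\le 1$. Nothing in your inductive hypothesis guarantees this: knowing only the height of the colon at stage $i$ is compatible with $\mu\left((E/\sum Ra_{v_j})_\p\right)$ being large, because the earlier $a_{v_j}$ could have images that are linearly dependent in $E_\p\otimes k(\p)$ for a prime $\p$ you did not take into account when the $a_{v_j}$ were chosen. In that case a single new general element cannot kill $E_\p/\sum Ra_{v_j}$ and the colon height does not go up.

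The paper closes this gap by strengthening the induction hypothesis: it carries along not just the height inequality but bounds on $\mu\left((W/\sum_{j\le i}Ra_{v_j})_\p\right)$ (properties (b) and (c) in the proof), and it chooses each new element $a_k$ to be good simultaneously at the minimal primes of \emph{all} the Fitting ideals $\mathrm{Fitt}_t(W/\sum_{j\le i}Ra_{v_j})$ for all earlier subsets and all $0\le t\le s+e-2$. This Fitting-ideal bookkeeping is exactly what guarantees that the chosen elements remain ``efficient'' generators at every prime that could later become critical, so that at each stage the quotient needs at most one more generator at the relevant primes. Your ``annihilator-type submodule'' and dimension-count sketch gesture at this but do not supply the mechanism; as written, the inductive step for condition (2) does not go through.
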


\begin{proof} We proceed by induction on $k$, where $0 \le k \le s$, to construct elements ${\ds a_{1}, \ldots, a_{k} \in W}$ such that, 
for all $0 \le i \le k$ and every subset ${\ds \{v_{1}, \ldots, v_{i}\}  \subseteq \{1, \ldots, k\} }$, we have
\begin{enumerate}[(a)]
\item ${\ds \mu \left( \left( W/ \sum_{j=1}^{k} Ra_{j} \right)_{\p} \right) = \max\{0, \, \mu(W_{\p} )-k \} }$ whenever ${\ds \p \in \mathcal{P}}${\rm ;}
\item ${\ds \mu \left(  \left( W/\sum_{j=1}^{i} Ra_{v_{j}}  \right)_{\p} \right) \le  \dim(R_{\p}) +e-1-i}$ for all $\p \in \Supp(E)$ with ${i-e+1 \le \dim(R_{\p}) \le s-1}${\rm ;}
\item ${\ds  \left( W/\sum_{j=1}^{i} Ra_{v_{j}}  \right)_{\p} =0 }$ for all prime ideals $\p$ with $\h(\p) \le i-e$.
\end{enumerate}

\noindent Suppose $k=0$. It is enough to prove (b).  For all ${\ds \p}$ with ${\ds \dim(R_{p}) \le s-1}$ we have ${\ds E_{\p}=W_{p}}$. Since $E$ satisfies $\G_{s}$, we have 
\[ \mu(W_{\p}) = \mu(E_{\p}) \le \dim(R_{\p}) + e-1.\]
Suppose that there exist ${\ds a_{1}, \ldots, a_{k-1} \in W}$ such that,  for all $0 \le i \le k-1$ and every subset ${\ds \{v_{1}, \ldots, v_{i}\} \subseteq \{1, \ldots, k-1\} }$, the statements (a),(b),(c) hold.

\medskip

\noindent Let $\PP_{k-1}$ be the power set of $\{1, \ldots, k-1\}$ and write ${\ds \widetilde{\nu}(i) \in \PP_{k-1}}$, where $i$ denotes the cardinality of the subset. For each ${\ds \widetilde{\nu}(i) = \{ v_{1}, \ldots, v_{i} \}}$, we write 
\[ N_{\widetilde{\nu}(i)} = \sum_{j=1}^{i} Ra_{v_{j}}, \;\; \mbox{and} \;\; \FF_{t}^{\widetilde{\nu}(i)} = \mbox{Fitt}_{t}(W/ N_{\widetilde{\nu}(i)}),  \;  \mbox{where} \; 0 \le t \le s+e-2.  \]
 Let 
\[ \mathcal{Q} = \mathcal{P} \cup \bigcup_{\tiny \widetilde{\nu}(i) \in \PP_{k-1}  } \bigcup_{t=0}^{s+e-2} \Min \left( \FF_{t}^{\widetilde{\nu}(i)}  \right).  \]
Then, by \cite[Lemma~1.3]{U94},  there exists ${\ds a_{k} \in W}$ such that for all ${\ds \widetilde{\nu}(i) \in \PP_{k-1}}$, 
\[ \mu \left(  \left( W/( N_{\widetilde{\nu}(i)} + Ra_{k}  )  \right)_{\p}   \right) = \max \left\{ 0, \;   \mu \left(  \left( W/N_{\widetilde{\nu}(i)}    \right)_{\p}   \right)  -1  \right\}  \;\; \mbox{for all} \;\;  \p \in \mathcal{Q}. \]
In particular, for all ${\ds \p \in \mathcal{P} \subseteq \mathcal{Q}}$,  we have
\[ \mu \left( \left( W/ \sum_{j=1}^{k} Ra_{j} \right)_{\p} \right)  =  \max \left\{ 0, \;   \mu \left(  \left( W/ \sum_{j=1}^{k-1} Ra_{j}  \right)_{\p}   \right)  -1  \right\} 
=    \max \left\{0, \, \mu(W_{\p} )-k \right\},  \]
where the last equality holds by the induction hypothesis. This proves statement (a).

\medskip

\noindent Let $\PP_{k}$ be the power set of $\{1, \ldots, k\}$  and write  ${\ds \nu(i) \in \PP_{k} }$, where $i$ denotes the cardinality of the  subset. We want to show that the statements (b), (c) hold for all $0 \le i \le k$ and for all ${\ds \nu(i)  = \{v_{1}, \ldots, v_{i}\}    \in \PP_{k} }$. If $k \notin \nu(i)$, then  ${\ds \nu(i) = \widetilde{\nu}(i) \in P_{k-1} }$. Thus, the claims follow from the induction hypothesis. Now, assume that  $k \in \nu(i)$ and note that  ${\ds \nu(i) = \widetilde{\nu}(i-1) \cup \{ k \} }$. Write
\[  N_{\nu(i)} = \sum_{j=1}^{i-1} Ra_{v_{j}} + Ra_{k} =  N_{\widetilde{\nu}(i-1)} + Ra_{k}. \]
\noindent To prove (b), let  $\p \in \Supp(E)$ with ${i-e+1 \le \dim(R_{\p}) \le s-1}$. Let ${\ds h= \dim(R_{\p}) +e-1-i }$. 
Suppose ${\ds \p \notin V \left( \FF_{h}^{\widetilde{\nu}(i-1)} \right) }$. Then 
${\ds \mu \left( \left( W/ N_{ \widetilde{\nu}(i-1) } \right)_{\p}  \right) \le h }$ and thus ${\ds  \mu \left( \left( W/ N_{\nu(i) } \right)_{\p} \right)  \le h }$ as well.

\medskip

\noindent Hence we may assume  ${\ds \p \in V \left( \FF_{h}^{\widetilde{\nu}(i-1)} \right) }$.  
For all prime ideals $\q$ with ${\ds \h(\q) \le i-e-1}$, by the induction hypothesis 
$\left( W/ N_{ \widetilde{\nu}(i-1) } \right)_{\q} =0$ and thus $ q \notin V \left( \FF_{h}^{\widetilde{\nu}(i-1)} \right)$. For all prime ideals $\q$ with ${\ds i-e \le \h(\q) \le h-e+i }$, by the the induction hypothesis 
\[ \mu \left( \left( W/ N_{ \widetilde{\nu}(i-1) } \right)_{\q} \right) \le \h(\q) + e-i \le h,\]
which implies that $ q \notin V \left( \FF_{h}^{\widetilde{\nu}(i-1)} \right)$.
That is, for all prime ideals $\q$ with ${\ds \h(\q) \le h-e+i }$,  we have ${\ds \q \notin V \left( \FF_{h}^{\widetilde{\nu}(i-1)} \right)}$.
Therefore, 
\[  h-e+i+1 \le  \h \left(  \FF_{h}^{\widetilde{\nu}(i-1)} \right) \le \h(\p) = h-e+i+1.\] 
Thus, ${\ds \p \in \Min \left(  \FF_{h}^{\widetilde{\nu}(i-1)} \right) \subseteq \mathcal{Q} }$ and we obtain the following.
\[ \begin{array}{rcl}
{\ds  \mu \left(  \left( W/ N_{{\nu}(i)}   \right)_{\p}   \right) } &=& {\ds   \max \left\{ 0, \;   \mu \left(  \left( W/( N_{\widetilde{\nu}(i-1)}   )  \right)_{\p}   \right)  -1  \right\}  } \\ && \\
&\le& {\ds \max \left\{0, \;  \h(\p) +e- i -1 \right\}  = h. }
\end{array}  \]

\medskip

\noindent To prove (c), let $\p$ be a prime ideal with $\h(\p) \le i-e$.  If ${\ds \p \notin V \left( \FF_{0}^{\widetilde{\nu}(i-1)}\right) }$, then 
${\ds \left( W/ N_{ \widetilde{\nu}(i-1) } \right)_{\p}  =0 }$.  Thus, ${\ds \left( W/ N_{\nu(i) } \right)_{\p}  =0 }$ as well.
Suppose then that  ${\ds \p \in V \left( \FF_{0}^{\widetilde{\nu}(i-1)} \right) }$.  For all primes $\q$ with $\h(\q) \le i-1-e$, by the induction hypothesis,  we have
\[  \left( W/ N_{ \widetilde{\nu}(i-1) } \right)_{\q}  =0, \] 
which implies $\q \notin V \left( \FF_{0}^{\widetilde{\nu}(i-1)} \right)$.
Therefore,
\[ i - e \le  \h( \FF_{0}^{\widetilde{\nu}(i-1)}  ) \le \h(\p) \le i-e, \]
and thus, ${\ds \p \in \Min \left( \FF_{0}^{\widetilde{\nu}(i-1)}  \right)  \subseteq \mathcal{Q} }$.  Moreover, we may assume ${\ds \p \in \Supp(E)}$. Since $\h(\p)= i-e$, the induction hypothesis implies
\[ \mu \left( \left( W/ N_{\widetilde{\nu}(i-1)}  \right)_{\p} \right)     -1   \le \dim(R_{\p}) + e-1-(i-1) -1  = (i-e) + e-i-1  = -1.\]
Therefore,
\[  \mu \left(  \left( W/ N_{{\nu}(i)}   \right)_{\p}   \right) =   \max \left\{ 0, \;   \mu \left(  \left( W/( N_{\widetilde{\nu}(i-1)}   )  \right)_{\p}   \right)  -1  \right\}  =0. \qedhere  \]
\end{proof}

\medskip

The following proposition allows us to obtain Theorem~\ref{exresmod} in the non local case.

\begin{Proposition}\label{exresmod1}
Let $R$ be a Noetherian ring. 
Let $\mathcal{P}$ be a finite {\rm(}possibly empty{\rm)} subset of $\Spec(R)$.
Let  $E$ be a finitely generated  $R$--module having a rank $e >0$.
Let $s$ be an integer.
Let $W$ be a submodule of $E$ such that  ${\ds \h( W :_{R} E) \ge s}$. Suppose that $E$ satisfies $\G_{s}$. Then there exist elements ${\ds a_{1}, \ldots, a_{s}}$ in $W$ such that
 \begin{enumerate}[{\rm (1)}]
 \item ${\ds \mu \left( \left( W/ \sum_{j=1}^{s} Ra_{j} \right)_{\p} \right) = \max\{0, \, \mu(W_{\p}) -s \} }$, whenever ${\ds \p \in \mathcal{P}}${\rm ;}
 \item ${\ds \h \left(  \sum_{j=1}^{i} Ra_{j} :_{R} E \right)  \ge i-e+1 }$, whenever ${\ds 0 \le i \le s}$.
 \end{enumerate}
In particular, if $s \ge e$, then  there exists an $i$--residual intersection of $E$ for every ${\ds e \le i \le s}$.
\end{Proposition}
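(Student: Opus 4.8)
The plan is to reduce Proposition~\ref{exresmod1} to the local case already established in Theorem~\ref{exresmod}. The main idea is that all the properties we want to verify are local in nature, so it suffices to produce a single tuple of elements $a_1, \ldots, a_s \in W$ that works simultaneously at every prime in a suitable finite set. First I would observe that conditions (1) and (2) only need to be checked at finitely many primes: condition (1) is a statement about the finite set $\mathcal{P}$, and condition (2), which asserts $\h(\sum_{j=1}^i Ra_j :_R E) \ge i - e + 1$ for $0 \le i \le s$, can be rephrased as saying that for each $i$ the ideal $(\sum_{j=1}^i Ra_j :_R E)$ is not contained in any of the finitely many primes $\q$ with $\h(\q) \le i - e$ that lie in the support of $E/\sum_{j=1}^i Ra_j$ — equivalently, that certain Fitting-ideal minimal primes are avoided. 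So I would enlarge $\mathcal{P}$ to a finite set $\mathcal{P}'$ of primes that additionally records all the relevant minimal primes of the Fitting ideals that will arise as the $a_j$ are chosen one at a time (this is exactly the set $\mathcal{Q}$ built inside the proof of Theorem~\ref{exresmod}, but now one must be slightly careful to do this uniformly over the finitely many candidate primes since we are not localizing).

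Next, since the residue field hypothesis in Theorem~\ref{exresmod} is about a local ring, I would address the passage from $R$ to its localizations. The cleanest route is the standard trick: adjoin a variable or pass to $R[x]_{\m R[x]}$-style arguments is \emph{not} needed here because we are not assuming $R$ is local and the infinite-residue-field hypothesis has been dropped; instead, the prime-avoidance engine \cite[Lemma~1.3]{U94} used inside Theorem~\ref{exresmod} already works over an arbitrary Noetherian ring when one wants to avoid finitely many primes (it is the homogeneous prime avoidance / general element lemma), so I would simply rerun the induction of Theorem~\ref{exresmod} verbatim but with $\mathcal{P}$ replaced by the larger finite set $\mathcal{Q}$ described above, choosing each $a_k \in W$ to satisfy the Fitting-ideal-drop condition at all primes of $\mathcal{Q}$ at once. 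The height estimates in (b) and (c) of that proof are then deduced exactly as before: for a prime $\p$ witnessing a potential failure of (2) at step $i$, one shows $\p$ must be a minimal prime of the Fitting ideal $\FF_h^{\widetilde\nu(i-1)}$, hence lies in $\mathcal{Q}$, hence the generator count drops there, giving the contradiction. The only difference from the local proof is bookkeeping: we no longer localize at a single $\p$, but since all conclusions are about membership in or height of ideals, they globalize automatically.

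Then I would extract conditions (1) and (2) of Proposition~\ref{exresmod1} from the analogue of (a), (b), (c): condition (1) is literally (a) with $k = s$, and condition (2) follows because (b) and (c) together say that $W/\sum_{j=1}^i Ra_j$ is zero at every prime $\q$ with $\h(\q) \le i - e$, which is precisely $\h(\sum_{j=1}^i Ra_j :_R E) \ge i - e + 1$ (using that this colon ideal is the annihilator of $E/\sum_{j=1}^i Ra_j$ up to radical, or more directly that $(\sum_{j=1}^i Ra_j :_R E)_\q = R_\q$ iff $E_\q = (\sum_{j=1}^i Ra_j)_\q$ iff $(W/\sum Ra_j)_\q$ maps onto $(E/\sum Ra_j)_\q = 0$). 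Finally, for the ``in particular'' clause, set $U = \sum_{j=1}^i Ra_j$ and $K = (U :_R E)$ for $e \le i \le s$: then $U$ is $i$-generated, $\h(K) \ge i - e + 1$ by (2), and $K$ is proper because $\h(K) \ge i - e + 1 \ge 1$, so by Definition~\ref{resmod} $K$ is an $i$-residual intersection of $E$; one should also note $K \ne R$ forces us to take $W$ with $\h(W :_R E)$ genuinely $\ge s$, which is the hypothesis. The main obstacle I anticipate is purely expository: making sure the finite set $\mathcal{Q}$ is assembled correctly so that a \emph{single} choice of each $a_k$ handles all the primes that matter for conditions (1) and (2) simultaneously, and confirming that \cite[Lemma~1.3]{U94} applies over a general Noetherian ring without the infinite-residue-field assumption — but since that lemma is a prime-avoidance statement, this should go through. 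No genuinely new difficulty arises beyond Theorem~\ref{exresmod}.
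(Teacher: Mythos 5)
Your final plan — rerun the inductive construction from Theorem~\ref{exresmod} with $\mathcal{P}$ enlarged to a finite set $\mathcal{Q}$ containing the relevant minimal primes of Fitting ideals, and apply \cite[Lemma~1.3]{U94} directly over the Noetherian ring $R$ — is exactly what the paper does, so the substance is right. Two small points of precision. First, your opening sentence frames this as a reduction to the local case, but your own subsequent analysis (correctly) discards that idea: the paper never localizes and neither do you; the proof is a direct rerun over $R$. Second, ``rerun verbatim'' glosses over the one genuine difference from Theorem~\ref{exresmod}: here you track only the single chain of submodules $N = \sum_{j=1}^{k-1} Ra_j$ and its Fitting ideals $\FF_t = \mathrm{Fitt}_t(W/N)$, not the full power set of subsets $\{v_1,\ldots,v_i\} \subseteq \{1,\ldots,k-1\}$. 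That simplification is precisely why the argument goes through without the infinite-residue-field hypothesis, and it is also why the conclusion of Proposition~\ref{exresmod1} is weaker than that of Theorem~\ref{exresmod}, asserting (2) only for the initial segments $\sum_{j=1}^i Ra_j$ rather than for arbitrary sub-tuples $\sum_{j=1}^i Ra_{v_j}$ — a distinction that Corollary~\ref{exresmod2} then makes explicit. With that adjustment, the rest of your argument (deducing condition (2) from the vanishing of $W/\sum_{j \le i} Ra_j$ locally at primes of height $\le i-e$, and extracting the residual intersections for $e \le i \le s$) is correct and matches the paper.
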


\begin{proof}
The proof is almost identical to that of Theorem~\ref{exresmod}. We use induction on $k$, where $0\le k \le s$, to construct elements ${\ds a_{1}, \ldots, a_{k} \in W}$ such that  for all $0 \le i \le k$ we have  
\begin{enumerate}[(a)]
\item ${\ds \mu \left( \left( W/ \sum_{j=1}^{k} Ra_{j} \right)_{\p} \right) = \max\{0, \, \mu(W_{\p} )-k \} }$ whenever ${\ds \p \in \mathcal{P}}${\rm ;}
\item ${\ds \mu \left(  \left( W/\sum_{j=1}^{i} Ra_{j}   \right)_{\p} \right) \le  \dim(R_{\p}) +e-1-i}$ for all $\p \in \Supp(E)$ with ${i-e+1 \le \dim(R_{\p}) \le s-1}${\rm ;}
\item ${\ds  \left( W/\sum_{j=1}^{i} Ra_{j}  \right)_{\p} =0 }$ for all prime ideals $\p$ with $\h(\p) \le i-e$.
\end{enumerate}
The case $k=0$ is clear. Suppose that there exist ${\ds a_{1}, \ldots, a_{k-1} \in W}$ such that for all $0 \le i \le k-1$, the statements (a),(b),(c) hold.
We write 
\[ N= \sum_{j=1}^{k-1} Ra_{j}, \;\; \mbox{and} \;\; \FF_{t} = \mbox{Fitt}_{t}(W/ N),  \;  \mbox{where} \; 0 \le t \le s+e-2.  \]
 Let  ${\ds \mathcal{Q} = \mathcal{P} \cup  \bigcup_{t=0}^{s+e-2} \Min \left( \FF_{t}  \right) }$. 
Then, by \cite[Lemma~1.3]{U94}, there exists ${\ds a_{k} \in W}$ such that 
\[ \mu \left(  \left( W/( N+ Ra_{k}  )  \right)_{\p}   \right)  = \max \left\{ 0, \;   \mu \left(  \left( W/N    \right)_{\p}   \right)  -1  \right\}  \;\; \mbox{for all} \;\;  \p \in \mathcal{Q}. \]
The assertions follow as in the proof of Theorem~\ref{exresmod}. 
\end{proof}

One can even improve the previous result and choose the elements $a_1, \ldots, a_k$ as in Proposition~\ref{exresmod1} to be part of a generating sequence for the module $W$.

\begin{Corollary}\label{exresmod2}
Let $(R, \m, k)$ be a Noetherian local ring. 
Let  $E$ be a finitely generated  $R$--module having a rank $e >0$.
Let $s$ be an integer with $s \ge e$.
Suppose that $E$ satisfies $\G_{s}$.
Let $W$ be a submodule of $E$ such that ${\ds \h(W:_RE) \ge s \ge \mu(W)}$. 
\begin{enumerate}[{\rm (1)}]
\item There exists a generating sequence ${\ds a_{1}, \ldots, a_{s}}$ of $W$ such that
\[ \h \left(   \sum_{j=1}^{i} Ra_{j}  :_{R} E \right)  \ge i-e+1 \]
for each $0 \le i \le s$.
\item If $k$ is infinite, then there exists a generating sequence ${\ds a_{1}, \ldots, a_{s}}$ of $W$ such that
\[ \h \left(   \sum_{j=1}^{i} Ra_{v_j}  :_{R} E  \right) \ge i-e+1 \]
for each $0 \le i \le s$ and for every subset ${\ds \{v_{1}, \ldots, v_{i}\} \subseteq \{1, \ldots, s\} }$.
\end{enumerate}
\end{Corollary}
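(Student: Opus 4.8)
The plan is to deduce both parts from Proposition~\ref{exresmod1} by carefully choosing the auxiliary data. The key observation is that the height condition $\h(\sum Ra_j :_R E) \ge i-e+1$ is unaffected by Nakayama-type modifications, while the condition that $a_1,\dots,a_s$ generate $W$ can be forced by a suitable choice of the finite set $\mathcal{P}$. First I would apply Proposition~\ref{exresmod1} with $\mathcal{P} = \{\m\}$ (in the local setting $\m \in \Spec(R)$, so this is a legitimate finite subset). The proposition then produces $a_1,\dots,a_s \in W$ satisfying the height bounds in (2) of that statement, together with
\[ \mu\!\left(\left(W/\textstyle\sum_{j=1}^{s} Ra_j\right)_{\m}\right) = \max\{0,\ \mu(W_{\m}) - s\}. \]
Since $W$ is a module over the local ring $(R,\m)$, we have $\mu(W_{\m}) = \mu(W)$, and by hypothesis $\mu(W) \le s$, so the right-hand side is $0$; that is, $W/\sum_{j=1}^{s} Ra_j$ vanishes after localizing at $\m$, hence vanishes, hence $a_1,\dots,a_s$ generate $W$. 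This gives part (1) directly, once one notes that the height inequality for $\sum_{j=1}^i Ra_j :_R E$ is exactly conclusion (2) of Proposition~\ref{exresmod1}.

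For part (2), with $k$ infinite, I would instead invoke Theorem~\ref{exresmod} (the local version), again taking $\mathcal{P} = \{\m\}$. That theorem additionally delivers, for every subset $\{v_1,\dots,v_i\} \subseteq \{1,\dots,s\}$, the height bound $\h(\sum_{j=1}^i Ra_{v_j} :_R E) \ge i-e+1$, which is precisely the stronger conclusion demanded in (2); and its conclusion (1) gives $\mu((W/\sum_{j=1}^s Ra_j)_{\m}) = \max\{0,\mu(W_{\m})-s\} = 0$ by the same reasoning as above, so again $a_1,\dots,a_s$ is a generating sequence of $W$. Thus both statements reduce to the already-established results by the single device of inserting the maximal ideal into the set $\mathcal{P}$ and using $\mu(W)\le s$.

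The only point requiring a little care — and the part I would flag as the main obstacle, though it is a mild one — is making sure that the Fitting-ideal machinery in the proofs of Theorem~\ref{exresmod} and Proposition~\ref{exresmod1} is genuinely insensitive to adding $\m$ to $\mathcal{P}$: one must check that the set $\mathcal{Q}$ constructed there still contains $\mathcal{P}$, which it does by construction, so \cite[Lemma~1.3]{U94} still applies at $\m$. No new argument is needed beyond that bookkeeping, so the corollary follows formally.
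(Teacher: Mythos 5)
Your proposal is correct and is exactly the paper's approach: apply Theorem~\ref{exresmod} and Proposition~\ref{exresmod1} with $\mathcal{P}=\{\m\}$, then observe that since $R$ is local the localization at $\m$ is the identity, so $\mu(W_\m)=\mu(W)\le s$ forces $W/\sum_{j=1}^s Ra_j=0$. The paper states this in one line; you have merely spelled out the Nakayama-type bookkeeping, which is the right thing to do.
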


\begin{proof}
It follows from applying Theorem~\ref{exresmod} and Proposition~\ref{exresmod1} with ${\ds \mathcal{P} = \{ \m \} }$.
\end{proof}

\begin{Remark}\label{rired}{\rm
Let $(R, \m)$ be a Noetherian local ring with infinite residue field. 
Let $E$ be a finitely generated $R$-module with a rank $e$ and analytic spread $\ell$.
Suppose $E$ admits a proper reduction and let  $U$ be a minimal reduction of $E$. 
If $E_{\p}$ is of linear type for all primes $\p$ with $\h(\p) \le \ell -e$, then 
$(U:_{R} E)$ is an $\ell$--residual intersection of $E$.  
}\end{Remark}

The previous remark gives a concrete strategy to build $\ell$-residual intersections of a module of analytic spread $\ell$ from its proper minimal reductions. 
In the case of ideals, residual intersections obtained as colon ideals as in Remark~\ref{rired} have proved to be extremely useful in the study of the Cohen-Macaulay property of blowup algebras \cite{{JU96},{U96},{PX13},{M15}} and of the core \cite{{CPU01},{CPU02}}. 
It is then natural to ask what classes of modules are of linear type locally in codimension at most $\ell - e$.

\begin{Proposition}\label{llt}
Let $R$ be a Cohen-Macaulay local ring of dimension $d$ with infinite residue field. Let $E$ be a finite, torsionfree $R$-module with rank $e>0$,  and analytic spread $\ell$. 
  \begin{enumerate}[{\rm (1)}] 
  \item If $E$ has the projective dimension $1$, then  $E$ is of linear type locally in codimension at most $\ell - e$.
  \item If $\ell=e$, then $E$ is of linear type locally in codimension $0$.
  \item If $E$ satisfies $\G_{2}$, then $E$ is of linear type locally in codimension at most $1$.
  \item If $d =3$ and $E$ satisfies $\G_{\ell -e +1}$, then $E$ is of linear type locally in codimension at most $\ell -e$.
 \end{enumerate} 
\end{Proposition}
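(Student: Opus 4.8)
The plan is to reduce each statement to a local question about the symmetric algebra and the torsion submodule of $\Sym(E)$, and then invoke known linear-type criteria for modules. Recall that $E$ is of linear type at a prime $\p$ precisely when $\Sym(E_\p) = \R(E_\p)$, equivalently when $\Sym(E_\p)$ is torsionfree over $R_\p$ (so that the canonical surjection $\Sym(E_\p)\twoheadrightarrow \R(E_\p)$ is an isomorphism). The hypotheses $\G_s$ pass to localizations, and for a module with rank, at a prime $\p$ with $\dim R_\p \le s-1$ the $\G_s$ condition gives the Artin–Nagata-type bound $\mu(E_\p)\le \dim R_\p + e - 1$. So in all four parts I work with a localization $R_\p$, set $d'=\dim R_\p$, and use that $\mu(E_\p) \le d' + e - 1$ in the relevant range.

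For (1), localize at a prime $\p$ with $\h(\p)\le \ell-e$; then $E_\p$ has projective dimension at most $1$ over $R_\p$ and satisfies $\G_{\h(\p)+1}$, hence $\mu(E_\p)\le \h(\p)+e$. A finitely generated module of projective dimension $\le 1$ over a Noetherian local ring is of linear type as soon as it satisfies $\G_\infty$ up to the bound coming from its number of generators — this is the module analogue of the classical fact that an ideal of linear type with projective dimension $1$ is of the expected codimension; the precise statement I would cite is that a projective-dimension-one module satisfying $\G_s$ is of linear type whenever $s \ge \mu(E) - e + 1$ (see the module Artin–Nagata theory, e.g.\ in the spirit of \cite{U94} and its module extensions, and in Section~\ref{SectionAN} of this paper where projective-dimension-one modules are shown to satisfy $\AN_s$ for all $s$). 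Since $\mu(E_\p) - e + 1 \le \h(\p)+1$, the $\G_{\h(\p)+1}$ condition suffices, and $E_\p$ is of linear type. Part (2) is the degenerate case: if $\ell = e$ then $E$ has no proper reductions, and at a minimal prime $\p$ of $R$ the module $E_\p$ is free of rank $e$ over the Artinian local ring $R_\p$ (using that $E$ is torsionfree and has rank $e$), so $\Sym(E_\p)$ is a polynomial ring and $E_\p$ is trivially of linear type.

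For (3), localize at $\p$ with $\h(\p)\le 1$. If $\h(\p)=0$ then $E_\p$ is free (torsionfree of rank $e$ over an Artinian local ring), hence of linear type. If $\h(\p)=1$, the $\G_2$ condition gives $\mu(E_\p)\le e$, so $E_\p$ is free over the one-dimensional Cohen–Macaulay (in particular, the relevant) local ring $R_\p$ — a torsionfree module of rank $e$ minimally generated by $e$ elements is free — and again of linear type. For (4), with $d=3$ localize at $\p$ with $\h(\p)\le \ell-e$; the only new case beyond (3) is $\h(\p)=2$ with $\ell-e\ge 2$, where $R_\p$ is a two-dimensional Cohen–Macaulay local ring and $\G_{\ell-e+1}$ gives $\mu(E_\p)\le \h(\p)+e-1 = e+1$; a torsionfree module of rank $e$ with at most $e+1$ generators over a two-dimensional Cohen–Macaulay ring is of linear type (its first syzygy, if nonzero, is a torsionfree rank-one module, i.e.\ essentially an ideal, and one checks the symmetric algebra is torsionfree — this is the case handled by the classical "almost complete intersection" type argument, or by Avramov's and others' low-codimension linear-type results). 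Throughout, the main obstacle I anticipate is part (1): verifying that the projective-dimension-one hypothesis together with $\G_{\ell-e+1}$ genuinely forces linear type locally, rather than merely controlling the number of generators. The cleanest route is to feed the bound $\mu(E_\p)\le \h(\p)+e$ into the presentation $R_\p^{\,m}\xrightarrow{\varphi} R_\p^{\,n}\to E_\p\to 0$ with $\varphi$ injective and use that the expected-codimension condition on the ideals of minors (equivalently $\G_s$) makes the symmetric algebra torsionfree, exactly as in the ideal case \cite[Corollary~1.6]{U94} transported to modules; I would either cite the module version directly or spell out this minors argument.
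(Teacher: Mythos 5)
Your proofs of (2) and (3) match the paper in substance: for (2), torsionfree of rank $e$ over a total ring of fractions gives freeness at minimal primes; for (3), $\G_2$ forces freeness in codimension one.

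For (1), there is a genuine gap. Statement (1) assumes only that $E$ is torsionfree with a rank and projective dimension one --- there is \emph{no} $\G_s$ hypothesis. Your argument, however, begins by asserting that $E_\p$ ``satisfies $\G_{\h(\p)+1}$'' for every prime $\p$ with $\h(\p)\le \ell-e$, and then feeds this into a criterion of the form ``projective dimension one plus $\G_s$ with $s\ge\mu(E_\p)-e+1$ implies linear type.'' That $\G$-bound is precisely what has to be established and is not an automatic consequence of the stated hypotheses; writing it in as a given makes the argument circular. (There is also a small off-by-one: $\G_{\h(\p)+1}$ gives $\mu(E_\p)\le \h(\p)+e-1$, not $\h(\p)+e$.) The paper disposes of (1) with a single citation to \cite[Proposition~4]{Avramov}, which treats projective-dimension-one modules directly and contains exactly the nontrivial content your sketch leaves unproved. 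If you want a self-contained proof you would need to reproduce the relevant part of Avramov's argument, showing that the heights of the ideals of minors of a presentation matrix behave correctly after localizing at a prime of height at most $\ell-e$.

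For (4), the paper first records that $e\le\ell\le d+e-1=e+2$ by \cite[Proposition~2.3]{SUV03}, so the only case not already covered by (2) and (3) is $\ell=e+2$ and $\h(\p)=2$; there one observes $E_\p$ satisfies $\G_\infty$ over the two-dimensional ring $R_\p$ and cites \cite[Proposition~4.4(b)]{SUV03}. You arrive at the same height-two case, but your justification --- ``a torsionfree module of rank $e$ with at most $e+1$ generators over a two-dimensional Cohen--Macaulay ring is of linear type'' --- is not correct as stated. Bounding $\mu(E_\p)$ at $\p$ alone is insufficient; you also need $E_\p$ to be free in codimension one inside $R_\p$ (the full $\G_\infty$ condition, which here follows from $\G_2$), and the syzygy/ideal sketch you offer does not actually produce torsionfreeness of $\Sym(E_\p)$. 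The clean route is the one the paper takes: verify $\G_\infty$ for $E_\p$ and invoke the established result from \cite{SUV03}. So while (2)--(3) are fine and (4) is in the right spirit but under-justified, (1) as written does not prove what is claimed.
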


\begin{proof} Statement (1) follows from \cite[Proposition~4]{Avramov}. Statements (2) and (3) follow from the definition of rank and the $\G_{2}$ assumption, respectively.  

\medskip

\noindent (4): Note that ${\ds e \le \ell \le 3+e-1 = e +2}$, by \cite[Proposition~2.3]{SUV03}.  If $\ell=e$, then the assertion follows from (2). If $\ell = e+1$, then the claim follows from (3).
Now suppose $\ell = e+2$. Then $E$ satisfies $\G_{3}$. In particular, $E$ satisfies $\G_{2}$ and $E_{\p}$ is free for every prime $\p$ with $\h(\p) \le 1$. Let $\p$ be a prime with $\h(\p)=2$. Then $E_{\p}$ satisfies $\G_{\infty}$, so it is of linear type by \cite[Proposition~4.4(b)]{SUV03}.
 \end{proof}
 
 As seen in Proposition~\ref{llt}, modules of projective dimension $1$ are of linear type locally in codimension at most $\ell-e$,  a fact that was essential in the proof of \cite[Theorem~2.4]{CPU03}. The next proposition gives two classes of modules that are in fact free locally in codimension at most $\ell-e$. 
 
\begin{Proposition}\label{IdealModules}
 Let $R$ be a Cohen-Macaulay local ring of dimension $d \ge 3$ with infinite residue field. Let $I$ be an ideal of height $g \ge 2$ and suppose that $\,g +1 = \ell(I) < \mu(I)$. Then, for $e \ge 2$, the modules 
   $$ E = I \oplus R^{e-1} \quad \mathrm{ and } \quad M = \underbrace{I \oplus \ldots \oplus I}_{e \; times}$$
 have analytic spread $\ell = \ell(I)+ e -1$, are free locally in codimension at most $\ell - e$ and admit a proper minimal reduction. 
\end{Proposition}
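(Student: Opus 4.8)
The plan is to analyze $E = I \oplus R^{e-1}$ and $M = I \oplus \cdots \oplus I$ separately, exploiting the fact that locally in small codimension the ideal $I$ becomes either the unit ideal or a complete intersection. First I would compute the analytic spread. Since the special fiber ring $\F(E)$ of a direct sum decomposes appropriately, one expects $\ell(E) = \ell(I \oplus R^{e-1})$. Here the key observation is that adjoining a free summand of rank $e-1$ to $I$ adds $e-1$ to both the rank and the analytic spread: indeed $\R(I \oplus R^{e-1})$ is, up to torsion, a polynomial extension in $e-1$ variables of $\R(I)$, so $\dim \F(E) = \dim \F(I) + (e-1) = \ell(I) + e - 1$. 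For $M$, I would use that $\R(M)$ surjects onto $\R(I)$ and that all $e$ copies contribute; the Rees algebra of $M$ is a quotient of $\Sym(I)^{\otimes e}$ modulo torsion, and a dimension count (or a direct appeal to the behavior of analytic spread under direct sums, e.g. via \cite{SUV03} type results) gives $\ell(M) = \ell(I) + e - 1$ as well. In both cases $\ell = \ell(I) + e - 1 = g + e$, which matches the stated formula.

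Next I would verify freeness locally in codimension at most $\ell - e = g - 1$. Let $\p$ be a prime with $\h(\p) \le g - 1 < g = \h(I)$. Then $\p \not\supseteq I$, since $I$ has height $g$, so $I_\p = R_\p$. Consequently $E_\p = R_\p \oplus R_\p^{e-1} = R_\p^{e}$ is free, and likewise $M_\p = R_\p^{e}$ is free. This is the easy part and handles condition two immediately with no hypotheses beyond $g \ge 2$ and $\h(\p) \le g-1$.

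The remaining point is the existence of a proper minimal reduction, which is where the hypothesis $\ell(I) < \mu(I)$ enters. Since $R$ has infinite residue field, $E$ (resp.\ $M$) has a minimal reduction $U$ with $\mu(U) = \ell(E) = \ell(I) + e - 1$. To see this reduction is proper, I would show $\mu(E) > \ell(E)$, equivalently $E$ has a proper reduction; by the last sentence of the preliminaries, $\ell(E) < \mu(E)$ is exactly the condition. Now $\mu(E) = \mu(I \oplus R^{e-1}) = \mu(I) + e - 1$ (the free summand contributes exactly $e-1$ generators, using that $I$ is a proper ideal so no generator of $I$ can be traded against a generator of $R^{e-1}$ — formally, $\mu(E) = \dim_k E/\m E = \dim_k I/\m I + (e-1) = \mu(I) + e - 1$). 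Hence $\mu(E) - \ell(E) = \mu(I) - \ell(I) > 0$ by hypothesis, so $E$ admits a proper minimal reduction. For $M$, similarly $\mu(M) = e \cdot \mu(I)$ while $\ell(M) = \ell(I) + e - 1 = g + e$; since $\mu(I) \ge \ell(I) = g + 1 \ge 3$ and $\mu(I) > \ell(I)$, we get $e\,\mu(I) \ge e(g+1) > g + e$ for $e \ge 2$ (indeed $e(g+1) - (g+e) = (e-1)g \ge g \ge 2 > 0$), so $\mu(M) > \ell(M)$ and $M$ admits a proper minimal reduction.

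The main obstacle I anticipate is the analytic spread computation for $M = I^{\oplus e}$: unlike the case of a free summand, there is no clean polynomial-extension description of $\R(M)$, and one must argue carefully that the diagonal-type generators do not drop the fiber dimension below $\ell(I) + e - 1$ while the relations among the $e$ copies of $I$ do not let it exceed that value. I would handle the upper bound via $\mu$ and generic minimal reductions, and the lower bound by localizing at a minimal prime of $\F(I)$ realizing $\ell(I)$ and adjoining $e-1$ generic linear forms, or alternatively by invoking the additivity of analytic spread for direct sums of modules over the appropriate base; this is the step that warrants the most care, and citing \cite{SUV03} or a direct argument with the special fiber ring would be the route I take.
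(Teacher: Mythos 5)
Your treatment of the freeness and proper-reduction parts matches the paper. For freeness, the paper observes that $E_\p$ (resp.\ $M_\p$) is free if and only if $\p \notin V(I)$, so the non-free locus has codimension exactly $g$, and then identifies $g=\ell(I)-1=\ell-e$. For the existence of a proper reduction, the paper gives the same calculations $\mu(E)=\mu(I)+e-1>\ell$ and $\mu(M)=e\,\mu(I)=\mu(I)+(e-1)\mu(I)>\ell(I)+e-1=\ell$, using $\mu(I)>\ell(I)$. One arithmetic slip: you wrote $\ell-e=g-1$, but since $\ell(I)=g+1$ you in fact have $\ell-e=\ell(I)-1=g$. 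Your subsequent argument (take $\h(\p)\le g-1<g=\h(I)$, so $\p\not\supseteq I$ and $I_\p=R_\p$) is correct, and is precisely what shows the non-free locus has codimension $g$; just make sure the bookkeeping against $\ell-e$ is done with $\ell-e=g$.

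The genuine gap is the analytic spread. The paper disposes of both computations by citing Bivi\`a-Ausina and Monta\~no: $\ell(E)=\ell(I)+e-1$ follows from \cite[Remark~3.9]{BAM20}, and $\ell(M)=\ell(I)+e-1$ from \cite[Corollary~3.13]{BAM20}. Your polynomial-extension argument for $\ell(E)$ (that $\R(E)\simeq\R(I)\otimes_R \Sym(R^{e-1})$ so the special fiber is a polynomial extension) is essentially the one the authors themselves use later in Theorem~\ref{balancedI}, but there the ring is assumed to be a \emph{domain}, and that hypothesis is what lets one control the torsion of $\Sym(I)\otimes_R R[X_1,\dots,X_{e-1}]$ via Lemma~\ref{torsion}. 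Proposition~\ref{IdealModules} only assumes $R$ Cohen--Macaulay local, so your argument as written does not apply; the citation to \cite{BAM20} avoids this. For $M=I^{\oplus e}$ you candidly say the computation is the hard part and gesture at an upper bound via $\mu$ and a lower bound via localization, but the upper bound $\ell(M)\le\mu(M)=e\,\mu(I)$ is far too weak, and the lower-bound sketch is not carried out. Here you really do need an external input: either \cite[Corollary~3.13]{BAM20}, or an argument identifying $\R(I^{\oplus e})$ with the multi-Rees algebra of $(I,\dots,I)$ and computing its fiber dimension. Without one of these, the claim $\ell(M)=\ell(I)+e-1$ is unproved in your write-up.
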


\begin{proof}
  Notice that, for a prime ideal $\p$, $E_{\p}$ is free if and only if $\p \notin V(I)$. Hence, the codimension of the non-free locus of $E$ is $g$. Similarly, the codimension of the non-free locus of $M$ is $g$. 
  Moreover, by \cite[Remark~3.9]{BAM20} it follows that $\ell(E)= \ell(I) + e -1$, while \cite[Corollary~3.13]{BAM20} implies that $\ell(M)= \ell(I) + e -1$. 
  Let $\ell = \ell(E) = \ell(M)$. Then, $g = \ell(I) -1 = \ell - e$, and the first two assertions are proved.
  
  \medskip
  
 \noindent To show that $E$ and $M$ admit a proper minimal reduction, it suffices to prove that $\mu(E) > \ell$ and $\mu(I) > \ell$. This follows from the assumption that $\mu(I) > \ell(I)$, since 
   $\mu(E) = \mu(I) + e - 1 > \ell(I) + e -1 = \ell\,$ and $\,\mu(M) = e \,\mu(I) = \mu(I) + (e -1)\mu(I) > \ell(I) + e -1 = \ell$.  
\end{proof}

  \begin{Example}
  {\rm Let $R= k[x_1, x_2, x_3, x_4]$ be a polynomial ring over an infinite field $k$ with $\dim R=4$. 
  Let $I= (x_1x_{2}, x_{2}x_{3}, x_{3}x_{4}, x_{1}x_{4})$ be the edge ideal of a square.   
  Then, $\h(I) = 2$ and $\ell(I) = 3 = \mu(I)-1$ by \cite[Proposition~3.1]{Vil}. Hence, $I$ satisfies the assumptions of Proposition~\ref{IdealModules}.}
 \end{Example}

\noindent  In Section~\ref{CoreM}, we consider another class of modules which are of linear type locally in codimension at most $\ell - e$. 

\medskip

\section{Artin--Nagata Properties}\label{SectionAN}

In this section we investigate when the residual intersections of a module are Cohen-Macaulay. This has been a central theme in the literature on residual intersections of ideals \cite{{AN72},{HVV85},{Huneke83},{U94},{CEU01}}. If the residual intersections of $I$ are Cohen-Macaulay, they can often be used to understand the powers of $I$ and its minimal reductions \cite{{U94},{JU96},{U96}}. Similarly as in the case of ideals \cite[Definition~1.2]{U94}, we define the Artin-Nagata conditions for modules as follows. 

\begin{Definition}\label{ANmod}{\rm
Let $R$ be a Noetherian ring, $E$ a finitely generated torsionfree $R$--module having a rank $e>0$, and $s$ an integer with $s \ge e$.
We say that $E$ satisfies ${\ds \AN_{s}}$ if $R/K$ is Cohen--Macaulay for every $i$--residual intersection $K$ of $E$ and 
for every ${\ds e \le i \le s}$.
}\end{Definition}

It is well-known that every strongly Cohen-Macaulay ideal in a Gorenstein local ring satisfies ${\ds \AN_{s}}$ for every integer $s$ \cite[Remark~2.10]{U94}. In particular, this class includes all perfect ideals of height two, which can be regarded as torsionfree modules of rank one and projective dimension one. Our next result shows that the same is true for all torsionfree modules of projective dimension one, independently of their rank.

\begin{Theorem}\label{pd1}
Let $R$ be a Cohen--Macaulay ring. Let $E$ be a finitely generated,
torsionfree $R$--module with rank $e>0$, and projective dimension $1$.
 Then $E$ satisfies ${\ds \AN_{s}}$ for any $s \ge e$. 
\end{Theorem}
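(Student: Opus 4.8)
The plan is to reduce the statement to the case of ideals via a standard "generic Bourbaki ideal" construction and then invoke the known residual theory for strongly Cohen--Macaulay ideals (or ideals in the linkage class of a complete intersection). Concretely, since $E$ is torsionfree of rank $e$ and $\mathrm{projdim}\,E=1$, over a suitable faithfully flat extension $R' = R(Z) = R(z_1,\dots,z_{e-1})$ one can choose $e-1$ generic linear combinations of a generating set of $E$ spanning a free submodule $F \cong R'^{\,e-1}$ with $E'/F \cong I$ for an $R'$-ideal $I$ of height $2$ (a generic Bourbaki ideal of $E$); here one uses that $E$ is torsionfree and of projective dimension one, which forces $\mathrm{grade}(I) = 2$ and $\mathrm{projdim}\,I = 1$, i.e. $I$ is a perfect ideal of height two, hence strongly Cohen--Macaulay and in particular satisfies $\AN_s$ for all $s$. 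The first step is therefore to set up this construction carefully, recording that $F$ is a free direct summand of $E'$ after localizing at primes where $E'$ is free, and that an $i$-residual intersection of $E$ base-changes to an $i$-residual intersection of $E'$ (the colon ideal $(U:_RE)$ behaves well under the flat map $R\to R'$, and heights are preserved).

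The second, and central, step is to match up residual intersections of the module $E'$ with residual intersections of the ideal $I$. Given an $i$-generated submodule $U \subseteq E'$ with $\h(U:_{R'}E') \ge i-e+1$, I would pass to $U + F \subseteq E'$ and observe that, modulo $F$, this produces an $(i-(e-1))$-generated submodule $\bar U$ of $I = E'/F$ with $(U:_{R'}E') = (U+F:_{R'}E') = (\bar U :_{R'} I)$ — the last equality because $F$ is free and $E'/F = I$, so the colon against $E'$ and the colon against $I$ coincide once we have absorbed the free part. (One must check $U+F$ still has the right number of generators and that the height bound $\h(U:_{R'}E')\ge i-e+1$ translates to $\h(\bar U:_{R'}I)\ge (i-e+1) = \big((i-e+1)\big) \ge \big((i-(e-1))\big) - 2 + 1$, i.e. $\bar U$ realizes an $(i-e+1+1)=(i-e+2)$-\,hmm---\,more precisely an $i'$-residual intersection of $I$ with $i' = i-(e-1)$ and $\h \ge i' - g + 1$ where $g=2$; this is exactly the bookkeeping that makes the two notions line up.) Since $I$ is a perfect height-two ideal it satisfies $\AN_{i'}$ for every $i'$, so $R'/(\bar U:_{R'}I)$ is Cohen--Macaulay; but that ring equals $R'/(U:_{R'}E')$, and Cohen--Macaulayness descends along the faithfully flat local map to give that $R/(U:_RE)$ is Cohen--Macaulay. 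Finally one handles the edge cases $i = e$ (where $K=(U:_RE)$ with $U$ free of rank $e$ generating $E'$ generically, so $\bar U = 0$ and $K$ is the annihilator-type ideal, still covered) and notes that a general $s$ is obtained by ranging over $e\le i\le s$.

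The main obstacle I anticipate is the first step: constructing the generic Bourbaki ideal and verifying that the residual-theoretic data transfers cleanly. In particular one needs (i) that a generic choice of $e-1$ elements of $E$ spans a free submodule $F$ with torsionfree quotient, so that $E'/F$ is genuinely an ideal of height two with projective dimension one — this uses $\G_s$-type genericity arguments in the spirit of Theorem~\ref{exresmod}, or one can cite the Simis--Ulrich--Vasconcelos / EHU machinery of generic Bourbaki ideals; and (ii) that for an \emph{arbitrary} $i$-generated reducing-type submodule $U$ (not just a reduction), the combination $U+F$ has minimal generating number $\le i$ after the base change and the colon computation $(U+F:_{R'}E') = (\overline{U+F}:_{R'}E'/F)$ holds verbatim. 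If instead one prefers a direct homological approach: resolve $E$ by $0\to R^{n-e}\xrightarrow{\varphi} R^n \to E\to 0$, note that $\Sym(E)$ is a complete intersection in $R[T_1,\dots,T_n]$ cut out by the $n-e$ linear forms dual to $\varphi$ (so $E$ is of linear type wherever it is "generic enough"), and analyze the residual intersection $K=(U:_RE)$ via the Koszul/approximation complex of these linear forms, showing it is resolved by an acyclic complex of the expected length $s-e+1$ — acyclicity following from the $\G_s$ hypothesis and the depth sensitivity of approximation complexes. Either route, the crux is the same: leveraging that $\mathrm{projdim}\,E=1$ makes $E$ behave, for residual purposes, exactly like a perfect height-two ideal.
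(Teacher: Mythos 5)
Your proposal takes a genuinely different route from the paper, and it has a real gap.

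The paper's proof is direct and short. Given an $i$-residual intersection $K = (U:_R E)$ with $U$ $i$-generated, one augments the resolution $0 \to R^{n-e} \xrightarrow{\varphi} R^n \to E \to 0$ by a map $\psi: R^i \to R^n$ lifting the inclusion $U \hookrightarrow E$ to obtain a presentation $R^{n-e+i} \xrightarrow{\Psi} R^n \to E/U \to 0$ with $\Psi = [\varphi \,|\, \psi]$. Since $K$ and $\mathrm{Fitt}_0(E/U) = I_n(\Psi)$ have the same radical, $\h(K) = \h\bigl(I_n(\Psi)\bigr)$. The hypothesis $\h(K) \ge i-e+1$ together with the Eagon--Northcott bound $\h\bigl(I_n(\Psi)\bigr) \le i-e+1$ forces equality, so $I_n(\Psi)$ has maximal height; Buchsbaum--Eisenbud then gives that $R/K$ is perfect of grade $i-e+1$, hence Cohen--Macaulay. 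No generic extension, no Bourbaki ideal, no approximation complexes.

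Your Bourbaki reduction has an unjustified (and generally false) step: the claimed equality $(U:_{R'}E') = (U+F:_{R'}E')$. You have $(U:_{R'}E') \subseteq (U+F:_{R'}E')$ because $U \subseteq U+F$, but the reverse inclusion fails unless $F \subseteq U$ (or at least $F$ is contained in the ``$E'$-saturation'' of $U$, which is not a given). In the Bourbaki construction $F$ is spanned by generic linear combinations of a generating set of $E'$, not of $U$, and for a proper submodule $U$ a generic element of $E'$ will not lie in $U$. This also wrecks the generator count: $\bar U = (U+F)/F$ is generated by the images of the $i$ generators of $U$, so it is $i$-generated, not $(i-e+1)$-generated, unless again $F \subseteq U$. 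The Corso--Polini--Ulrich reduction in \cite{CPU03} works precisely because there $U$ is a reduction, so one may take general elements of $U$ itself and quotient by those; for an arbitrary residually-intersecting $U$, this choice is not available, and the standard Bourbaki sequence $0\to F\to E'\to I\to 0$ does not talk to $U$. You flag several of the needed verifications in your ``main obstacle'' paragraph, but the $(U:E') = (U+F:E')$ step is the one that actually breaks, and you state it without comment. Your alternative sketch via $\Sym(E)$ and approximation complexes is also not the paper's argument and would require substantially more technology than the one-line Eagon--Northcott plus Buchsbaum--Eisenbud computation the authors use.
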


\begin{proof} Let $e \le i \le s$ and let $K$ be an $i$--residual intersection of $E$. Then there exists $i$--generated submodule $U \subseteq E$ such that $K=(U:_R E)$, and $\h(K) \ge i-e+1$. Consider the
commuting diagram
\[\begin{CD}
0 @>>> R^{n-e} @>{\varphi}>> R^n @>>> E @>>> 0 \\
&&&& @AA{\psi}A @AAA\\
&&&& R^{i} @>>> U @>>> 0\\
&&&&&& @AAA\\
&&&&&& 0
\end{CD}\]whose rows are exact. Then there is an exact sequence
\[ R^{n-e+i} \stackrel{\Psi}{\longrightarrow} R^n \longrightarrow E/U \longrightarrow 0,\]
where $\Psi=[\varphi \,|\, \psi]$. Since $K=\ann(E/U)$ and
$\mbox{\rm Fitt}_0(E/U)$ have the same radical, $\h(K)=\h(\mbox{\rm Fitt}_0(E/U))$. Therefore we obtain
\[ i-e+1 \le \h(K) = \h(\mbox{\rm Fitt}_0(E/U))=\h(I_n(\Psi)) \le
 (n-n+1)(n+i-e-n+1)=i-e+1, \] where the last inequality holds according to the Eagon-Northcott
bound. By \cite{BE77}, $R/K$ is perfect. Notice that
$\pdim(R/K)=\grade(R/K)=\grade(K)=\h(K)=i-e+1 < \infty$. Therefore $R/K$
is Cohen--Macaulay \cite[Theorem~2.1.5]{BH}. 
\end{proof}

If $I$ is a perfect ideal of height two satisfying $G_{\ell(I)}$, its Artin-Nagata property crucially allows to characterize when $r(I) \le \ell(I) -1$ \cite[Corollary~5.3]{U96}. By \cite[Theorem~2.6 and Corollary~3.4]{CPU02}, this is equivalent to the balanced condition that $\core(I) = (J :_{R} I) I =  (J :_{R} I) I$ for every minimal reduction $J$ of $I$ as described in Theorem~\ref{TheoremCPU02}. In fact, in the setting of Theorem~\ref{TheoremCPU02}, $(J :_R I)$ is an $\ell(I)$-residual intersection of $I$ for any minimal reduction $J$ of $I$. 

\medskip

Similarly, if $E$ has projective dimension $1$ and satisfies $\G_{\ell(E) -e +1}$, then by \cite[Theorem~2.4]{CPU03}  the condition that $\r(E) \le \ell(E) - e$  is equivalent to  $\core(E) = (U :_{R} E) E =  (U :_{R} E) U$ for every minimal reduction $U$ of $E$ as recalled in Theorem~\ref{TheoremCPU03}. The result is proved by induction on the rank, the rank $1$ case being the known case of perfect ideals of height two \cite[Theorem~2.6 and Corollary~3.4]{CPU02}. It is then natural to ask whether Theorem~\ref{TheoremCPU03} can be reinterpreted in terms of the residual intersections of $E$.

\medskip

A careful analysis of the proof of \cite[Theorem~2.4]{CPU03} reveals that, in order for the induction to be possible, it suffices that $\h (U :_{R} E) \ge \ell(E) - e +1$ for any minimal reduction $U$ of $E$ \cite[Lemma~2.2(b)]{CPU03}. According to Definition~\ref{resmod}, this condition means that $(U :_{R} E)$ is an $\ell(E)$-residual intersection of $E$. By Remark~\ref{rired}, this is guaranteed whenever $E$ is of linear type locally in codimension at most $\ell(E) -e$, which follows from \cite[Propositions~3~and~4]{Avramov} if $E$ has projective dimension one. On the other hand, it is reasonable to ask whether the proof of \cite[Theorem~2.4]{CPU03} could be generalized to other classes of modules that are of linear type locally in codimension at most $\ell(E) -e$.

\medskip

With Theorem~\ref{TheoremCPU02} in mind, we should aim at a class of modules that generalizes universally weakly $(\ell(I)-1)$-residually $(\S_{2})$ ideals \cite{{CEU01},{CPU02}}. We omit this definition, as it is technical and will not be needed in the rest of the paper. However, we recall that in a Gorenstein ring, in order for an ideal $I$ satisfying $\G_{\ell(I)}$ to be universally weakly $(\ell(I)-1)$-residually $(\S_{2})$, it suffices that ${\ds \Ext_R^{g+j}(R/I^j,R)= 0}$ for ${\ds 1 \le j \le \ell(I)-g}$ \cite[Theorem 4.1]{CEU01}, \cite[page 2582]{CPU02}. 
When $g=2$, the latter condition can be  rewritten as ${\ds \Ext_R^{\,j+1}(I^j,R) \simeq \Ext_R^{j+2}(R/I^j,R)= 0}$ for ${\ds 1 \le j \le \ell(I)-2 }$. In the next section, we prove a module version of Theorem~\ref{TheoremCPU02} for modules satisfying ${\ds \Ext_R^{\,j+1}(E^j,R)= 0}$ for ${\ds 1 \le j \le \ell(E)-e -1}$.

\section{Core of Modules}\label{CoreM}

In this section, we extend Theorem~\ref{TheoremCPU02} (or \cite[Theorem~2.4]{CPU03}) to orientable modules. Recall that a finitely generated $R$--module $E$ is called {\em orientable} if it has a rank $e >0$ and
${\ds ( \wedge^{e} E)^{**}  \simeq R }$, where ${\ds (-)^{*}}$ denotes the functor ${\ds \Hom_{R}( - ,  R) }$. Examples of orientable modules include modules of finite projective dimension over any Noetherian ring, and finitely generated modules over a UFD. 

\medskip

 When studying the Rees algebra of a finitely generated, torsionfree module $E$ with a positive rank, one can construct a Bourbaki ideal $I$ associated to $E$, which shares many of the properties of the module. For instance, the Rees algebra of $E$ is Cohen-Macaulay if and only if the Rees algebra of $I$ is Cohen-Macaulay \cite[Theorem~3.5]{SUV03}. Moreover, ${\rm{grade}} I \ge 2$ if and only if $E$ is orientable \cite[Proposition~3.2]{SUV03}. 

\medskip

The key idea underlying the results of \cite{SUV03} is to identify ideals as torsionfree modules of rank one and perform an induction on the rank, to reduce a statement about modules to a statement about ideals. However, this process requires that one constructs a Bourbaki ideal by first enlarging the ring $R$ to a generic extension $R'$. While a generic extension of the ring is not expected to preserve the core, in \cite{CPU03} Corso, Polini and Ulrich could similarly reduce the problem of studying the core of a module of projective dimension one to the case of perfect ideals of height two, by going modulo general elements rather than generic elements. In a similar fashion, an induction on the rank will allow us to extend Theorem~\ref{TheoremCPU02} to orientable modules.

\medskip

We begin by identifying a sufficient condition for an orientable module $E$ to be of linear type locally in codimension at most $\ell-e$.

\begin{Lemma}\label{htUEExt}
Let $R$ be a Gorenstein local ring of dimension $d \ge 4$. Let $E$ be a finite, torsionfree, and orientable $R$-module with $\rank(E)=e>0$, and analytic spread $\ell(E)=\ell \ge e+2$. Assume that $E$ is $\G_{\ell-e+1}$ and that ${\ds \Ext_R^{\,j+1}(E^j,R)= 0}$ for ${\ds 1 \le j \le \min \{\ell-e-1, \, d-3 \} }$. Then
\begin{enumerate}[{\rm (1)}]
\item  ${\ds E_{\p}}$ is of linear type for every prime $\p$ with $\h(\p) \le \ell-e$.
\item  For any minimal reduction $U$ of $E$, we have ${\ds  \h(U:_{R} E) \ge \ell -e +1}$.  
\end{enumerate}
\end{Lemma}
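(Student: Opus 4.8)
\textbf{Proof proposal for Lemma~\ref{htUEExt}.}

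The plan is to prove (1) first and then derive (2) as a consequence. For (1), fix a prime $\p$ with $\h(\p) = c \le \ell - e$. If $c \le 1$, then since $E$ is $\G_2$ (as $\ell - e + 1 \ge 3$), $E_\p$ is free, hence trivially of linear type; so assume $2 \le c \le \ell - e$. The idea is to reduce to a ring of dimension $3$ so that Proposition~\ref{llt}(4) applies. I would like to pass to $R_\p$, but $R_\p$ has dimension $c$ which may exceed $3$; instead, the standard trick is to cut down by a regular sequence. More precisely, I would choose a sequence of general elements $x_1, \ldots, x_{c-3}$ in $\m$ (assuming $c \ge 4$; if $c = 2$ or $3$ we are already in the range covered by Proposition~\ref{llt}(3) or (4) applied to a dimension-$\le 3$ localization) forming a regular sequence on $R$ and on the relevant modules $E^j$, and set $\bar R = R/(x_1, \ldots, x_{c-3})$. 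The key point is that ``linear type'' and the $\G_s$ condition localize, and that being of linear type can be checked after going modulo a regular sequence of general elements on the symmetric algebra side. The $\Ext$ vanishing hypothesis $\Ext_R^{j+1}(E^j, R) = 0$ for $1 \le j \le \min\{\ell - e - 1, d-3\}$ is exactly what guarantees that the $E^j$ behave well under such specialization and, crucially, that $\bar E := E \otimes \bar R$ remains torsionfree with the same rank and still satisfies $\G_{\ell - e + 1}$; the depth/$\Ext$ bound forces the symmetric powers to be torsionfree in the relevant codimensions, which is what makes the generic-element argument go through. Then $\bar R_{\bar\p}$ has dimension $3$, $\bar E_{\bar\p}$ is torsionfree with rank $e$ and satisfies $\G_{\ell - e + 1}$, and Proposition~\ref{llt}(4) shows $\bar E_{\bar\p}$ is of linear type locally in codimension $\le \ell - e$; descending, $E_\p$ is of linear type.

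For (2), let $U$ be a minimal reduction of $E$. Since $\mu(U) = \ell(E) = \ell$ (the residue field being infinite), $U$ is $\ell$-generated. Consider the ideal $(U :_R E)$. For every prime $\p$ with $\h(\p) \le \ell - e$, part (1) tells us $E_\p$ is of linear type, so by Remark~\ref{rired} (or directly: a reduction of a linear-type module equals the module), we get $U_\p = E_\p$, hence $\p \notin V(U :_R E)$. Therefore every prime containing $(U :_R E)$ has height at least $\ell - e + 1$, i.e.\ $\h(U :_R E) \ge \ell - e + 1$. This shows $(U :_R E)$ is an $\ell$-residual intersection of $E$ in the sense of Definition~\ref{resmod}.

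The main obstacle I anticipate is making the specialization argument in part (1) rigorous: one must verify that cutting by a general regular sequence preserves torsionfreeness of $E$ and of enough symmetric powers $E^j$ (so that the Bourbaki-type / linear-type criterion of \cite{SUV03} survives), and that the $\G_s$ condition is inherited. This is precisely where the hypothesis $\Ext_R^{j+1}(E^j, R) = 0$ for $1 \le j \le \min\{\ell - e - 1, d - 3\}$ must be used: it controls the depths of the $E^j$ well enough that a general hyperplane section remains torsionfree in the needed range, and the number of elements $c - 3 \le \ell - e - 3 \le \ell - e - 1$ stays within the range where the $\Ext$ vanishing is available. Once this bookkeeping is set up, the reduction to Proposition~\ref{llt}(4) is routine, and (2) follows formally from (1).
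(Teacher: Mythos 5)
Your proof of part (2) from part (1) is fine and matches the paper's intent. However, your proposed proof of part (1) has a genuine gap in both the low-codimension and the high-codimension cases.

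For $\h(\p)=2$ or $3$, Propositions~\ref{llt}(3) and~\ref{llt}(4) do not suffice: Proposition~\ref{llt}(3) only gives linear type locally in codimension $\le 1$, and Proposition~\ref{llt}(4) (stated for a ring of global dimension $3$) gives linear type locally in codimension $\le \ell-e$, which is bounded by $\dim R_\p - 1$ and so never reaches $\p$ itself. What actually handles $\h(\p)\in\{2,3\}$ is the combination of $\G_\infty$ of $E_\p$ with \emph{orientability}, via \cite[Propositions~4.4(b) and~4.6(a)]{SUV03}; you do not use orientability at all here.

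For $\h(\p)\ge 4$ your plan — cut by a general regular sequence of length $\h(\p)-3$ and invoke Proposition~\ref{llt}(4) — runs into the same problem: even if the specialization bookkeeping were carried out, after cutting you would be applying a result that gives linear type only in codimension $\le \ell-e$ in a ring of dimension $3$, which never covers the dimension-$3$ prime you care about. You correctly flag that "making the specialization argument rigorous" is the obstacle, but this is not just a matter of bookkeeping; the route you sketch would need a statement like \cite[Proposition~4.6(a)]{SUV03} or, more to the point, \cite[Theorem~3.3]{C2} at the base. The paper does not cut down by hand at all: for $\dim R_\p\ge 4$ it splits on $\ell(E_\p)$. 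If $\ell(E_\p)=e+1$ it uses a generic Bourbaki ideal, which becomes a $\G_\infty$ height-two complete intersection, hence of linear type by \cite[Theorem~6.1]{HSV83}, and then transfers linear type back via \cite[Theorem~3.5]{SUV03}. If $\ell(E_\p)\ge e+2$ it localizes the $\Ext$ vanishing (checking carefully that $\min\{\ell(E_\p)-e-1,\dim R_\p-3\}\le\min\{\ell-e-1,d-3\}$, which is where the $\min$ in the hypothesis is used) and then cites \cite[Theorem~3.3]{C2} directly. That cited theorem is precisely the ingredient your sketch tries to re-derive; without it, or something equivalent, your argument does not close.
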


\begin{proof} Statement (2) follows from (1) directly.  To prove (1), let $\p$ be  a prime with ${\ds \h(\p) \le \ell - e}$.  Then ${\ds E_{\p}}$ is finite, torsionfree, orientable, with $\rank(E_{\p})=e$.   Note that $E$ is $\G_{\ell-e+1}$, where $\ell - e+ 1 \ge 3$, and ${\ds E_{\p}}$ satisfies  $\G_{\infty}$.  If $\h(\p)=0$, then $E_{\p}$ is free because $E$ has a rank. If $\h(\p)=1$, then $E_{\p}$ is free because $E$ satisfies $\G_{2}$.  If $\h(\p)=2$ or $\h(\p)=3$, then $E_{\p}$ is of linear type because $E_{\p}$ is orientable and satisfies $\G_{\infty}$ \cite[Proposition~4.4~(b), Proposition~4.6~(a)]{SUV03}.

\medskip

\noindent Suppose that ${\ds  \dim( R_{\p} )  \ge 4}$ and that $E_{\p}$ is not free.  
Suppose that ${\ds \ell(E_{\p}) = e +1}$. By \cite[Proposition~4.1(b)]{SUV03}, it must then be that ${\ds \mu(E_{\p}) \le e+1}$,  whence ${ \ds \ell(E_{\p}) = e +1 = \mu(E_{\p}) }$. Thus, a generic Bourbaki ideal $I$ of ${ \ds E_{\p} }$ satisfies $\, {\ds \mu(I) = \ell(I) = 2 = \h(I)}$ and so is a $\G_{\infty}$ complete intersection.  By \cite[Theorem~6.1]{HSV83}, it follows that $I$ is of linear type. By \cite[Theorem~3.5]{SUV03}, ${\ds E_{\p}}$ is of linear type. 

\medskip

\noindent We may now assume that $\ell(E_p)\ge e+2$.  Let ${\ds m=\min \left\{ \ell(E_{\p}) - e -1, \; \dim(R_{\p})-3   \right\} }$. Then $m \ge 1$. 
Since
\[ \ell(E_{\p}) - e -1 \le \ell(E) - e-1, \quad \mbox{and} \quad  \dim(R_{\p})-3  \le \ell-e-3 < \ell -e-1, \]
then $m \le \ell (E) -e-1$. Also, $m \le \dim(R_{\p})-3  \le d-3$ and hence
\[ m \le \min \{\ell-e-1, \, d-3 \}. \]
Thus, for every ${\ds 1 \le j \le m}$, we have 
\[  \Ext_{R_{\p}}^{\,j+1}(E_{\p}^j,R_{\p})  = \left( \Ext_R^{\,j+1}(E^j,R)  \right)_{\p}  = 0.     \]
By \cite[Theorem~3.3]{C2}, $E_{\p}$ is of linear type. 
\end{proof}

\medskip

\begin{Proposition}\label{free}
Let $R$ be a Gorenstein local ring of dimension $d$ with infinite residue field. Let $E$ be a finite, torsionfree, and orientable $R$-module with $\rank(E)=e>0$, and analytic spread $\ell(E)=\ell \ge e+2$. Assume that $E$ is $\G_{\ell-e+1}$ and that ${\ds \Ext_R^{\,j+1}(E^j,R)= 0}$ for every ${\ds 1 \le j \le \ell-e-1}$.  Suppose that the Rees algebra $\R(E)$ satisfies $(\S_{2})$.   Then for any minimal reduction $U$ of $E$, we have
\[U/(U:_{R} E)U \simeq (R/ (U:_{R} E) )^{\ell}.\]
\end{Proposition}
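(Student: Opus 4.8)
The plan is to reduce the statement to the analogous fact for a generic Bourbaki ideal of $E$, where the desired isomorphism becomes a known computation. First I would pass to a generic extension $R' = R(t_1,\ldots,t_n)$ or $R[t_1,\ldots,t_n]_{\m[t_1,\ldots,t_n]}$ and choose a minimal reduction $U$ of $E$ and elements giving a generic Bourbaki sequence $0 \to F \to E \to I \to 0$, where $F$ is free of rank $e-1$ and $I$ is a Bourbaki ideal of $E$; by \cite[Proposition~3.2]{SUV03} the orientability of $E$ forces $\grade I \ge 2$, and under the hypotheses on $\G_s$ and on the vanishing of $\Ext_R^{j+1}(E^j,R)$, Lemma~\ref{htUEExt} gives that $E$ (hence $E'$) is of linear type locally in codimension at most $\ell - e$, so that $(U :_R E)$ is an $\ell$-residual intersection of $E$ by Remark~\ref{rired}. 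Since $\R(E)$ satisfies $(\S_2)$, so does $\R(I)$ by \cite[Theorem~3.5]{SUV03}, and I would want to choose the Bourbaki element compatibly with $U$ so that the image $\overline U$ of $U$ in $I$ is a minimal reduction of $I$ with $(U :_R E) = (\overline U :_R I)$ and $\h(\overline U :_R I) \ge \ell - g + 1$ where $g = \grade I$.

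The key step is then to establish, for the ideal $I$, the isomorphism $\overline U / (\overline U :_R I)\overline U \simeq (R/(\overline U :_R I))^{\ell}$. Here $\overline U$ is generated by $\ell = \ell(I)$ elements forming a minimal reduction, and $K := (\overline U :_R I)$ is an $\ell$-residual intersection. The point is that $\overline U$ is generated by an $\ell$-element regular-type sequence (a $d$-sequence modulo the residual, or more precisely the generators of $\overline U$ map to a minimal generating set of $\overline U/K\overline U$ with no relations over $R/K$). I would argue that the Koszul relations among a minimal generating set $u_1,\ldots,u_\ell$ of $\overline U$ all lie in $K(u_1,\ldots,u_\ell)$, using that $I$ satisfies $\G_\ell$ and the residual intersection $K$ is Cohen-Macaulay (which, when $g = 2$, follows from the $\Ext$-vanishing via the Artin-Nagata machinery, or directly from Theorem~\ref{pd1} in the perfect height-two case), together with the fact that $I$ is of linear type locally in codimension $\le \ell - g$ so that $\overline U_\p = I_\p$ is free of rank one off $V(K)$. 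This forces the presentation matrix of $\overline U/K\overline U$ over $R/K$ to be zero, giving the free module $(R/K)^\ell$. The analysis of the free locus and the relation module is essentially the same computation carried out in \cite{CPU03} in the projective-dimension-one case and in \cite{U96,CPU02} for perfect height-two ideals; I would cite those and adapt.

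Finally I would descend from $R'$ back to $R$: the formation of $U/(U :_R E)U$ commutes with the faithfully flat base change $R \to R'$, and if $(U'/(U' :_{R'} E')U') \simeq (R'/(U' :_{R'} E'))^\ell$ then the same holds over $R$ because $R'$ is faithfully flat and the modules involved are finitely presented, so the isomorphism descends (one checks $(U' :_{R'} E') = (U :_R E)R'$, which holds since colon commutes with flat base change for finitely generated modules). The main obstacle I anticipate is the bookkeeping in the reduction step: one must choose the generic Bourbaki element so that a given minimal reduction $U$ of $E$ descends to a minimal reduction $\overline U$ of $I$ with the colon ideals matching exactly and with the height bound $\h(\overline U :_R I) \ge \ell - g + 1$ preserved — this is where the linear-type-in-low-codimension hypothesis (Lemma~\ref{htUEExt}) and the $\G_{\ell-e+1}$ condition do the real work, and keeping track of what "general" versus "generic" choices are permissible (following the philosophy of \cite{CPU03}) without destroying the core is the delicate point.
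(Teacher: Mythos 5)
Your plan is in the right spirit: reduce to the height-two ideal case via Bourbaki, using the $\G_{\ell-e+1}$ hypothesis, the Ext-vanishing, and the $(\S_2)$ condition on $\R(E)$, and noting that orientability forces $\grade I \ge 2$. However, there are two concrete gaps that the paper avoids by structuring the argument as a one-step-at-a-time induction on rank.

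First, the ``compatibility'' you flag as your main worry is genuinely a gap, not just bookkeeping. The generic Bourbaki construction in \cite{SUV03} takes $F' \subseteq E'$ generated by generic elements of $E$; for a fixed minimal reduction $U$ of $E$, there is no reason these elements should lie in $U'$, so $U'$ need not surject onto a minimal reduction of the Bourbaki ideal $I$, and the colons $(U' :_{R'} E')$ and $(\overline{U} :_{R'} I)$ need not match. To guarantee $F' \subseteq U'$, one must instead take general (not generic) elements of $U$, and then the single-jump Bourbaki sequence is replaced by a chain of quotients by one general element at a time. This is exactly the inductive step the paper uses, invoking \cite[Lemma~2.2]{CPU03} to control $\widetilde{E}=E/Rx$ and $\widetilde{U}=U/Rx$ after factoring out one general $x \in U$; the $(\S_2)$ hypothesis on $\R(E)$ is used precisely to get $\R(\widetilde{E}) \simeq \R(E)/(x)$ and propagate the Ext-vanishing.

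Second, even granting the compatibility, your proposal does not address how to lift the ideal-level isomorphism back up to $U$. If $I$ is the Bourbaki ideal, then $\ell(I) = \ell - e + 1 \ne \ell$, and the ideal-case statement gives $\overline{U}/K\overline{U} \simeq (R/K)^{\ell-e+1}$ where $K = (\overline{U}:_R I)$; to conclude $U/(U:_RE)U \simeq (R/(U:_RE))^{\ell}$ one must show that the $e-1$ generators of the free summand impose no extra relations modulo $(U:_RE)$. In the paper this is handled at each inductive step: from the freeness of $\widetilde{U}/(\widetilde{U}:_R\widetilde{E})\widetilde{U}$ one deduces that $I_1(\varphi) \subseteq (U:_RE)$ for a presentation $\varphi$ of $U$, i.e.\ that all Koszul-type relations $(L_i :_R x_i)$ lie in $(U:_RE)$, by reading them off in $\widetilde{U}$. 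Your sketch asserts the presentation matrix vanishes over $R/K$ ``by a similar computation'' but gives no mechanism for this; the step-by-step induction is the mechanism. Finally, the paper's base case does not require your Koszul analysis directly: it shows $g=2$ (using the Ext-vanishing to rule out $g \ge 3$), then cites \cite[Theorem~4.1]{CEU01} to get $(\ell-1)$-residual $(\S_2)$ and \cite[Lemma~2.5]{CPU02} for the freeness.
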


\begin{proof}
We proceed by induction on $e$. Suppose $e=1$. Then $E$ is isomorphic to an ideal $I$ with analytic spread $\ell \ge 3$ and height $g\ge 2$, since $E$ is orientable \cite[Proposition~3.2]{SUV03}.  Suppose $g \ge 3$.  Then ${\ds 1 \le g -2 \le \ell -2}$ and by assumption we have
\[ 0 = \Ext_R^{\,g-1}(I^{g-2},R)  \simeq  \Ext_{R}^{g}( R/ I^{g-2}, R) \neq 0, \]
because  ${\ds \grade (I^{g-2}) = \grade(I)=g}$, which is a contradiction. Thus, $g=2$. 

\medskip

\noindent Moreover, $I$ satisfies  $\G_{\ell-1}$ with $\ell -1 \ge 2$ and 
\[ \Ext_R^{\,j+2}(R/I^j,R) \simeq \Ext_R^{\,j+1}(I^j,R)= 0 \;\; \mbox{for} \;\; 1 \le j \le \ell-2. \]
Hence, by \cite[Theorem~4.1]{CEU01},  $I$ is $(\ell-1)$--residually $(S_{2})$. Moreover, $I$ satisfies $\G_{\ell}$ (by assumption) and is $(\ell-2)$--weakly residually $(S_{2})$. The claim  now follows from \cite[Lemma~2.5]{CPU02}.

\medskip

\noindent Suppose ${\ds e \ge 2}$ and  the statement is true for modules satisfying the assumptions and having rank $e-1$.  Let $U$ be a minimal reduction of $E$, $x$ a general element of
$U$, and let $\widetilde{E}$ and $\widetilde{U}$ denote $E/Rx$ and  $U/Rx$, respectively.  Then by \cite[Lemma~2.2~(a),(d),(e)]{CPU03}, ${\ds \widetilde{U}}$ is a minimal reduction of ${\ds \widetilde{E}}$, ${\ds \rank( \widetilde{E} ) = e-1}$, and ${\ds \ell( \widetilde{E} ) = \ell -1}$. 
By Lemma~\ref{htUEExt}, we have ${\ds  \h(U:_{R} E) \ge \ell -e +1}$. Thus, by \cite[Lemma~2.2~(b)]{CPU03}, $\widetilde{E}$ is torsionfree and satisfies $\G_{\ell-e+1}$. 
Moreover, since $Rx$ is free of rank $1 <e$, we have ${\ds (\wedge^{e}Rx)^{**} =0 }$. Because  $E$ is orientable,  the exact sequence 
${\ds 0 \to Rx \to E \to \widetilde{E} \to 0 }$ implies that $\widetilde{E}$ is orientable as well.

\medskip

\noindent  Since $x$ is regular on $\mathcal{R}(E)$, by \cite[Lemma~2.1]{CPU03} we have the following exact sequence.
\[  0 \rar E^{j-1} \stackrel{\cdot x}{\lar} E^{j} \lar E^{j}/xE^{j-1} \rar 0. \]
This induces the long exact sequence:
\[ \cdots \lar \Ext^{j}_{R}( E^{j-1}, R) \lar \Ext^{j+1}_{R}( E^{j}/xE^{j-1}, R) \lar \Ext^{j+1}_{R}( E^{j}, R) \lar  \Ext^{j+1}_{R}( E^{j-1}, R)  \lar \cdots. \]
By assumption, we have ${\ds  \Ext^{j}_{R}( E^{j-1}, R)  =0 = \Ext^{j+1}_{R}( E^{j}, R)}$. Thus, ${\ds \Ext^{j+1}_{R}( E^{j}/xE^{j-1}, R) =0}$. Since the Rees algebra $\R(E)$ satisfies $(\S_{2})$, by \cite[Lemma~2.2]{CPU03}, we have ${\ds  \R(\widetilde{E}) \simeq  \R(E)/(x)}$ and hence ${\ds \Ext_R^{\,j+1}( \widetilde{E}^j,R) =  \Ext^{j+1}_{R}( E^{j}/xE^{j-1}, R) = 0}$ for ${\ds 1 \le j \le  \ell( \widetilde{E} )  - \rank( \widetilde{E} ) -1=   \ell-e-1 }$.
Then by the induction hypothesis, we have 
\[ \widetilde{U}/( \widetilde{U} :_{R} \widetilde{E} ) \widetilde{U} \simeq (R/ ( \widetilde{U} :_{R} \widetilde{E}) )^{\ell -1}.\] 

\medskip

\noindent Let ${\ds S= R/(\widetilde{U} :_{R} \widetilde{E} ) = R/(U:_RE) }$. The induction hypothesis implies that ${\ds \widetilde{U} \otimes S}$ is a free $S$--module of rank $\ell -1$. We want to show that $U \otimes S$ is a free $S$--module of rank $\ell$. 
Equivalently, we want to show that 
\[ \FF_{\ell -1}(U \otimes S) =0, \quad \mbox{and} \quad \FF_{\ell}(U \otimes S) = S. \]
Note that  the ideals of minors commute with ring extensions. i.e., ${\ds \FF_{\ell -1}(U \otimes S) = \FF_{\ell -1} (U) \otimes S  }$.  We can choose a presentation ${\ds R^{t} \stackrel{\varphi}{\lar} R^{\ell} \stackrel{\pi}{\lar} U \rar 0}$, where  ${\ds \FF_{\ell}(U \otimes S) = S}$ is satisfied automatically. Thus, the assertion follows once we prove that ${I_{1}(\varphi) \subseteq (U:_RE)}$. Now this is equivalent to 
\[ \Big( (Rx_{1} + \cdots+ \widehat{Rx_{i}} + \cdots + Rx_{\ell}) :_R x_{i}  \Big) \subseteq (U:_RE) \;\; \mbox{for every} \;\; 1 \le i \le \ell, \] 
where $\{x_1, \ldots, x_{\ell}\}$ is a generating set of $U$.

\medskip

\noindent Take ${\ds x \in \{ x_{1}, \ldots, \widehat{x_i}, \ldots, x_{\ell} \} }$ and let ${\ds L_{i} = (Rx_{1} + \cdots+ \widehat{Rx_{i}} + \cdots + Rx_{\ell})}$.  Then we have
\[   ( L_{i} :_R x_{i}  ) =  \Big( \widetilde{ L_{i}} : _R\widetilde{x_{i}} \Big)  \subseteq ( \widetilde{U} :_R \widetilde{E} ) =  (U :_R E),       \]
where the middle inclusion holds by the induction hypothesis.
\end{proof}

\medskip

\noindent Now we are ready to prove our main result. In particular, our result can be viewed as generalization of \cite[Theorem~2.4]{CPU03} to a class of modules that satisfy certain Ext vanishing conditions.

\begin{Theorem}\label{core}
Let $R$ be a Gorenstein local ring of dimension $d$ with infinite residue field. Let $E$ be a finite, torsionfree, and orientable $R$-module with $\rank(E)=e>0$, and analytic spread $\ell(E)=\ell \ge e+2$. Assume that $E$ is $\G_{\ell-e+1}$ and that ${\ds \Ext_{R}^{\,j+1}(E^j, R)= 0}$ for every ${\ds 1 \le j \le \ell-e-1}$. Suppose that the Rees algebra $\R(E)$ satisfies $(\S_{2})$.
The following are equivalent{\rm \,:}

\begin{enumerate}[{\rm (i)}]
\item $(U:_RE)E \subseteq \core(E)$ for every minimal reduction $U$ of $E${\rm \,;}
\item $(U:_RE)U=(U:_RE)E=\core(E)$ for every minimal reduction  $U$ of $E${\rm \,;}
\item$(U:_RE)$ is independent of $U$ for every minimal reduction $U$ of $E$.
\end{enumerate}
\end{Theorem}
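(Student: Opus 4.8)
The plan is to reduce Theorem~\ref{core} to the case of ideals—more precisely to Theorem~\ref{TheoremCPU02}—by an induction on the rank $e$, mirroring the strategy used in \cite{CPU03} but replacing the projective dimension one hypothesis by the Ext-vanishing condition. The engine of the reduction is the passage to general elements: given a minimal reduction $U$ of $E$ and a general element $x \in U$, set $\widetilde E = E/Rx$ and $\widetilde U = U/Rx$. By the work already done in Proposition~\ref{free} (and the cited \cite[Lemma~2.1, Lemma~2.2]{CPU03}), $\widetilde U$ is a minimal reduction of $\widetilde E$, we have $\rank(\widetilde E) = e-1$, $\ell(\widetilde E) = \ell - 1$, $\widetilde E$ is again torsionfree, orientable, satisfies $\G_{\ell-e+1}$, the Rees algebra $\R(\widetilde E) \simeq \R(E)/(x)$ satisfies $(\S_2)$, and crucially $\Ext_R^{\,j+1}(\widetilde E^{\,j}, R) = 0$ for $1 \le j \le \ell - e - 1$. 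Thus $\widetilde E$ satisfies all the hypotheses of the theorem with $e$ replaced by $e-1$. The base case $e = 1$: $E \cong I$ is an ideal, orientable forces $\grade I \ge 2$, the Ext-vanishing forces $g = \h(I) = 2$ exactly as in the proof of Proposition~\ref{free}, and then Theorem~\ref{TheoremCPU02} together with \cite[Corollary~3.4]{CPU02} (or the reinterpretation via the balanced condition) gives the equivalence of (i)–(iii), after checking that the depth hypothesis $\depth(I^j) \ge d - g - j + 2$ of Theorem~\ref{TheoremCPU02} follows from the Ext-vanishing condition via local duality.

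The implications (ii) $\Rightarrow$ (i) and (ii) $\Rightarrow$ (iii) are trivial, so the work is in (i) $\Rightarrow$ (iii) and (iii) $\Rightarrow$ (ii). For (iii) $\Rightarrow$ (ii): assuming $(U:_R E)$ is independent of $U$, one wants $\core(E) = (U:_R E)E = (U:_R E)U$. The inclusions $(U:_R E)U \subseteq (U:_R E)E \subseteq U$ are clear; since this holds for every minimal reduction, $(U:_R E)E \subseteq \core(E)$ once we know $(U:_R E)E \subseteq U'$ for all minimal reductions $U'$, which follows from independence because $(U:_R E) = (U':_R E)$ so $(U:_R E)E = (U':_R E)E \subseteq U'$. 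For the reverse inclusion $\core(E) \subseteq (U:_R E)U$ and the identification with $(U:_R E)E$, I would pass to $\widetilde E$: by Lemma~\ref{htUEExt}, $(U:_R E)$ is an $\ell$-residual intersection of $E$ (here the linear-type-in-low-codimension statement is used), and Proposition~\ref{free} gives that $U/(U:_R E)U$ is free over $R/(U:_R E)$ of rank $\ell$, which lets one compare $\core(E)$ with $\core(\widetilde E)$ via the exact sequence $0 \to Rx \to E \to \widetilde E \to 0$ and the behavior of reductions under general hyperplane sections; induction then closes the loop. For (i) $\Rightarrow$ (iii): $(U:_R E)E \subseteq \core(E) \subseteq U'$ for every minimal reduction $U'$, so $(U:_R E)E \subseteq U'$, hence $(U:_R E) \subseteq (U' :_R E)$, and by symmetry $(U:_R E) = (U' :_R E)$.

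The main obstacle I anticipate is the induction step for the equalities involving $\core$, specifically controlling how $\core(E)$ behaves under reduction modulo a general element $x$. One needs something like $\core(\widetilde E) = \overline{\core(E)}$ (the image of $\core(E)$ in $\widetilde E$), or at least the two relevant containments; this requires showing that every minimal reduction of $\widetilde E$ lifts, up to the relevant intersection, to a minimal reduction of $E$ containing $x$, and that a general element of a minimal reduction of $E$ can be taken to be $x$ itself. This is exactly the kind of "going modulo general elements preserves the core" argument that \cite{CPU03} carries out in the projective dimension one setting, and the hope is that the only properties of $E$ it actually used were the ones now supplied by Lemma~\ref{htUEExt} and Proposition~\ref{free}—namely $\h(U :_R E) \ge \ell - e + 1$ and the freeness of $U/(U:_R E)U$—so that the same argument applies verbatim. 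A secondary technical point is verifying the depth hypothesis of Theorem~\ref{TheoremCPU02} in the base case: one must translate $\Ext_R^{\,j+1}(I^j, R) = 0$ for $1 \le j \le \ell - 2$ into $\depth(I^j) \ge d - 2 - j + 2 = d - j$ for the relevant range, which is a standard local-duality computation over the Gorenstein ring $R$ but should be stated carefully.
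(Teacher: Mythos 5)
Your overall strategy matches the paper's: prove the hard inclusion $\core(E) \subseteq (U:_RE)U$ by induction on the rank $e$, using a general element $x \in U$ and the quotient $\widetilde E = E/Rx$, with the base case $e=1$ being handled by reducing to a height-two ideal and citing Theorem~\ref{TheoremCPU02}. The organization of the cycle differs superficially (you do (i)$\Rightarrow$(iii) easy, (iii)$\Rightarrow$(ii) hard; the paper does (i)$\Rightarrow$(ii) hard and the rest easy), but the content is the same.

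There is, however, a genuine gap in your base case. You propose to verify the depth hypothesis of Theorem~\ref{TheoremCPU02}, namely $\depth(I^j) \ge d - j$ for $1 \le j \le \ell-1$, from the Ext-vanishing ${\Ext_R^{\,j+1}(I^j,R)=0}$, $1 \le j \le \ell-2$, ``via local duality.'' This implication goes the wrong way. Over a Gorenstein local ring, local duality gives that $\Ext_R^{\,j+1}(I^j,R)=0$ is equivalent to the vanishing of the \emph{single} local cohomology module $H^{d-j-1}_{\m}(I^j)$, whereas $\depth(I^j) \ge d-j$ requires $H^{i}_{\m}(I^j)=0$ for \emph{all} $i < d-j$. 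Depth implies the Ext vanishing, not conversely; and the ranges of $j$ do not even match. The paper avoids the depth condition entirely: using the fact that $\Ext_R^{\,j+2}(R/I^j,R) \simeq \Ext_R^{\,j+1}(I^j,R)$ (and that these vanishings pass to a purely transcendental extension $S=R(X_1,\dots,X_n)$), it invokes \cite[Theorem~4.1]{CEU01} directly to conclude that $I$ is universally $(\ell-1)$-residually $(\S_2)$ --- precisely the $(\S_2)$ condition the depth hypothesis was there to ensure --- and then applies \cite[Theorem~2.6]{CPU02}. You should replace the local-duality step with this argument.

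A secondary point: your sketch of the inductive step stops at ``induction then closes the loop,'' but the closing step is nontrivial and is where Proposition~\ref{free} is actually used. After obtaining $\core(E) \subseteq (U:_RE)U + Rx$ for a general $x \in U$, the paper runs this with $x = x_i$ for each element of a minimal generating set $x_1,\dots,x_\ell$ of $U$, yielding $\core(E) \subseteq \bigcap_{i=1}^{\ell}\bigl(Rx_i + (U:_RE)U\bigr)$; it then shows this intersection equals $(U:_RE)U$ by writing $\alpha = r_i x_i + \beta_i$ and using that the images $\overline{x_1},\dots,\overline{x_\ell}$ form a \emph{basis} of the free $R/(U:_RE)$-module $U/(U:_RE)U$ (Proposition~\ref{free}) to conclude $r_i \in (U:_RE)$ for all $i$. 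This linear-independence argument is the crux of the induction and needs to be made explicit.
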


\begin{proof}  (i)$\Rightarrow$(ii). It suffices to show that $\core(E) \subseteq (U:_RE)U$. We prove this by induction on $e$. Suppose $e=1$. As proved in Proposition~\ref{free}, $E$ is isomorphic to an ideal $I$ of height $2$. Let $S=R(X_{1}, \ldots, X_{n})$, where $X_{1}, \ldots, X_{n}$ are variables over $R$ and $n$ is any positive integer. Then $IS$ has height $2$ and satisfies $\G_{\ell-1}$ with $\ell -1 \ge 2$. Moreover,
\[ \Ext_{S}^{\,j+2}(S/(IS)^j,S) \simeq \Ext_{S}^{\,j+1}((IS)^j,S) = \Ext_{R}^{\,j+1}(I^{j}, R) \otimes S= 0 \;\; \mbox{for} \;\; 1 \le j \le \ell-2. \]
Hence, by \cite[Theorem~4.1]{CEU01},  $IS$ is $(\ell-1)$--residually $(S_{2})$. Therefore, $I$ is universally $(\ell -1)$--residually $(\S_{2})$. 
The assertion now follows from \cite[Theorem~2.6]{CPU02}.

\medskip

\noindent Suppose ${\ds e \ge 2}$ and  the result holds for all modules satisfying the assumptions with rank $e-1$.  Let $x$ be a general element of
$U$.  Let $\widetilde{E}$ and $\widetilde{U}$ denote $E/Rx$ and  $U/Rx$, respectively. Then, as proved in Proposition~\ref{free},  $\widetilde{E}$ is a finite, torsionfree, and orientable $R$-module with $\rank(\widetilde{E})=e-1>0$, and analytic spread $\ell(\widetilde{E})=\ell-1$. Also $\widetilde{E}$ satisfies $\G_{\ell-e+1}$ and that ${\ds \Ext_R^{\,j+1}( \widetilde{E}^j,R)= 0}$ for every ${\ds 1 \le j \le \ell-e-1}$, as seen in the proof of Proposition~\ref{free}.

\medskip

\noindent By the induction hypothesis, we have
\[\core( \widetilde{E}) \subseteq (\widetilde{U} :_R \widetilde{E}) \widetilde{U} = (U:_RE) \widetilde{U}. \]
Note that ${\ds \core(E) \subseteq \bigcap_{x \in V} V}$, where $V$ is a reduction of $E$.  Moreover,  by \cite[Lemma~2.2~(c)]{CPU03}, ${\ds \core( \widetilde{E}) = \bigcap_{x \in V} \widetilde{V}}$, where $V$ is a reduction of $E$.  Therefore, we have 
\[ (\core(E) + Rx)/Rx  \subseteq \core( \widetilde{E}) \subseteq (U:_RE) \widetilde{U}. \]
Hence ${\ds \core(E) \subseteq (U:_RE)U+Rx}$.  Let $x_{1}, \ldots, x_{\ell}$ be a minimal generating set of $U$. Replacing $x$ by $x_{i}$, we obtain
\[ \core(E) \subseteq \bigcap^{\ell}_{i=1} (Rx_{i}+(U:_RE)U). \]
It is enough to show that  ${\ds \bigcap^{\ell}_{i=1} (Rx_{i}+(U:_RE)U) \subseteq (U:_RE)U}$.  If $(U:_RE)=R$, then the result is obvious. Suppose  $(U:_RE) \neq R$.  Let ${\ds \alpha \in \bigcap^{\ell}_{i=1} (Rx_{i}+(U:_RE)U)}$. Then, for each $i=1, \ldots, \ell$, 
\begin{equation}
 \label{eqalpha}
   \alpha = r_{i} x_{i} + \beta_{i},
\end{equation} 
where $r_{i} \in R$ and $\beta_{i} \in (U:_RE)U$.  It suffices to show that ${\ds r_{i} \in (U:_RE)}$ for all $i$.  By Proposition~\ref{free}, $U/(U:_{R} E)U \simeq (R/ (U:_{R} E) )^{\ell}$. Let $\overline{r_i}$ and  $\overline{x_i}$ denote the images of $r_i$ and $x_i$ in $(R/ (U:_{R} E) )^{\ell}$, via the isomorphism $U/(U:_{R} E)U \simeq (R/ (U:_{R} E) )^{\ell}$. Since for all $1 \le j \le \ell$ we have ${\ds \overline{r_{j}} \cdot \overline{x_{j}} = \overline{\alpha}}$, it follows that for all $1 \le j \neq k \le \ell$ 
\[ \overline{r_{j}} \cdot \overline{x_{j}} = \overline{r_{k}} \cdot \overline{x_{k}}.\]
In particular, ${\ds \overline{r_{j}} \cdot \overline{x_{j}} - \overline{r_{k}}\cdot \overline{x_{k}} = \overline{0} }$. 
By Proposition~\ref{free},  the images ${\ds \overline{x_{1}}, \ldots, \overline{x_{\ell}}}$ form a basis of the free module $U/(U:_RE)U$ over $R/(U:_RE)$. By the linearly independence, we have ${\ds \overline{r_{j}} = \overline{r_{k}} = \overline{0} }$. That is, for all $i=1, \ldots, \ell$, ${\ds r_{i} \in (U:_RE)}$. 
This shows that
\[ \alpha = r_{i} x_{i} + \beta_{i} \in (U:_RE)U. \]

\medskip

\noindent  (ii)$\Rightarrow$(iii).  Let $U_1$ and $U_2$ be minimal
reductions of $U$. Since $(U_{1}:_RE)E=\core(E)=(U_{2}:_RE)E$, we have
$(U_{1}:_RE)E=(U_{2}:_RE)E \subseteq U_{2}$. Then $(U_{1}:_RE) \subseteq
(U_{2}:_RE)$. Similarly, $(U_{2}:_RE) \subseteq (U_{1}:_RE)$.

\medskip

\noindent (iii)$\Rightarrow$(i).  Clearly $(V:_RE)E \subseteq V$ for any
minimal reduction $V$ of $E$. But by the independence (i),
$(U:_RE)E=(V:_RE)E$ for any minimal reduction $U$ of $E$. Thus $(U:_RE)E
\subseteq V$ for all minimal reduction $V$ of $E$ and hence $(U:_RE)E
\subseteq \core(E)$.
\end{proof}

\medskip

Our next goal is to construct examples of classes of modules satisfying the assumptions of Theorem~\ref{core}. Before doing so, we include a technical lemma, whose statement is well known. We include the proof here for completeness.
 
\begin{Lemma} \label{torsion}
  Let $R$ be an integral domain and $M$ a finitely generated $R$-module. Let $\tau_{R}(M)$ be the torsion submodule of $M$, $X$ an indeterminate and let $R[X]$ be a polynomial ring over $R$.  Then the $R$-torsion submodule of $M \otimes_R R[X]$ is $\tau_R(M) \otimes_R R[X]$.
\end{Lemma}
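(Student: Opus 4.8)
The plan is to prove the statement directly by tracking torsion elements through the tensor-product description $M \otimes_R R[X] = M[X]$, the module of polynomials in $X$ with coefficients in $M$. First I would note the two easy inclusions: any element of $\tau_R(M) \otimes_R R[X] = \tau_R(M)[X]$ is a polynomial whose coefficients are all killed by nonzero elements $r_1, \ldots, r_n \in R$, hence the whole polynomial is killed by the nonzero element $r_1 \cdots r_n$ (here we use that $R$ is a domain, so a product of nonzero elements is nonzero), so it lies in the $R$-torsion submodule of $M[X]$. That gives $\tau_R(M) \otimes_R R[X] \subseteq \tau_R(M[X])$.

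For the reverse inclusion, I would take an arbitrary $f = m_0 + m_1 X + \cdots + m_d X^d \in M[X]$ that is $R$-torsion, so there is a nonzero $r \in R$ with $rf = 0$ in $M[X]$. Since $M[X]$ is a free $R[X]$-module only in the trivial sense but is at least a direct sum $\bigoplus_{i \ge 0} M X^i$ as an $R$-module, the equation $rf = 0$ forces $r m_i = 0$ in $M$ for every $i$. Hence each coefficient $m_i$ lies in $\tau_R(M)$, so $f \in \tau_R(M)[X] = \tau_R(M) \otimes_R R[X]$. Combining the two inclusions proves the claim.

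\textbf{Main obstacle.} There is no serious obstacle here; the statement is essentially bookkeeping. The only point that requires care is the identification $M \otimes_R R[X] \cong M[X]$ as $R$-modules (so that ``coefficient'' makes sense) together with the fact that $R[X]$, being $R$-free, makes $M \otimes_R R[X]$ decompose as the $R$-module direct sum $\bigoplus_{i\ge 0} M X^i$; this is what lets us read off that $rf = 0$ implies $r m_i = 0$ coefficientwise. The hypothesis that $R$ is a domain is used exactly once, in the forward inclusion, to ensure that a finite product of nonzero annihilators is again a nonzero annihilator; without it the statement can fail. I would state this decomposition explicitly at the start and then the rest is immediate.
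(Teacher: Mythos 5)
Your proof is correct and takes essentially the same route as the paper: both identify $M \otimes_R R[X]$ with $M[X]$ as a graded direct sum $\bigoplus_i MX^i$, read off coefficientwise annihilation for one inclusion, and use that $R$ is a domain to multiply the annihilators of the coefficients for the other. The only difference is the order in which the two inclusions are presented.
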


\begin{proof}
 Notice that $M \otimes_R R[X] \simeq M[X]$. If ${\ds f(X) = \sum_{i=0}^n a_i X^i \in \tau_R (M[X]) }$, then there exists a non-zero element $c \in R$ so that $c f(X)=0$. This implies that $c a_i =0$ for all $i \ge 0$, i.e., $a_i \in \tau_R (M)$ for all $i$. 
 Conversely, let ${\ds f(X) = \sum_{i=0}^n a_i X^i \in M[X]}$ and suppose that $a_i \in \tau_R (M)$ for $0 \le i \le n$. Then, there exist non-zero elements $c_i \in R$ so that $c_i a_i = 0$, for all $0 \le i \le n$. Since $R$ is a domain, $c = c_0 \cdots c_n$ is a non-zero element of $R$ and $c f(X) =0$. Therefore, ${f(X) \in \tau_R (M[X])}$.
\end{proof}

The first class of modules we consider is certain types of ideal modules arising from height two ideals. 

\begin{Theorem} \label{balancedI}
  Let $R$ be a Gorenstein local domain of dimension $d$ with infinite residue field. Let $I$ be an $R$-ideal of height two, analytic spread $\ell(I) \ge 3$, and reduction number $r(I) \le \ell(I) -1$. Assume that $I$ satisfies $\G_{\ell(I)}$ and that $\depth(I^j) \ge d-j$ for ${\ds 1 \le j \le \ell(I)-1}$. Then, for $e>0$, the $R$-module $E = I \oplus R^{e-1}$ has reduction number $\r(E) = \r(I)$ and  $\rees{E}$ is Cohen-Macaulay. Moreover, the following are equivalent{\rm \,:}

\begin{enumerate}[{\rm (i)}]
\item $(U:_RE)E \subseteq \core(E)$ for every minimal reduction $U$ of $E${\rm \,;}
\item $(U:_RE)U=(U:_RE)E=\core(E)$ for every minimal reduction  $U$ of $E${\rm \,;}
\item$(U:_RE)$ is independent of $U$ for every minimal reduction $U$ of $E$.
\end{enumerate} 
\end{Theorem}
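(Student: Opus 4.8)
The plan is to verify that $E = I \oplus R^{e-1}$ satisfies all the hypotheses of Theorem~\ref{core}, after which the equivalence of (i)--(iii) is immediate. First I would record that $E$ is torsionfree (being a direct sum of torsionfree modules over the domain $R$) and orientable: since $R$ is a domain, $\wedge^e E \simeq \wedge^e(I \oplus R^{e-1})$, and because $I$ has rank one, the top exterior power is isomorphic to $I$ up to torsion, so $(\wedge^e E)^{**} \simeq I^{**} \simeq R$ because $\h(I) = 2$ forces $I^{**} = R$ (a height-two ideal in a Gorenstein, hence normal, domain has reflexive hull $R$). Next, $\rank(E) = e$. For the analytic spread, I would invoke \cite[Remark~3.9]{BAM20} (as used in Proposition~\ref{IdealModules}) to get $\ell(E) = \ell(I) + e - 1 \ge 3 + e - 1 = e + 2$, so the hypothesis $\ell \ge e+2$ holds, and moreover $\ell - e + 1 = \ell(I)$ and $\ell - e - 1 = \ell(I) - 2$.

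The $\G_{\ell-e+1} = \G_{\ell(I)}$ condition for $E$ should follow from the $\G_{\ell(I)}$ condition on $I$: at a prime $\p$ with $1 \le \dim R_\p \le \ell(I) - 1$, one has $\mu(E_\p) = \mu(I_\p) + (e-1) \le (\dim R_\p + 0) + (e-1) = \dim R_\p + e - 1$, using that a rank-one ideal module contributes $e = 1$ in the $\G_s$ bound for $I$. The Ext-vanishing hypothesis $\Ext_R^{j+1}(E^j, R) = 0$ for $1 \le j \le \ell(I) - 2$ is the crux: I would compute $E^j = (I \oplus R^{e-1})^{\otimes j}$-type powers inside the Rees algebra, but more directly, the $j$-th graded piece of $\R(E)$ decomposes, and $\Ext$ of a direct sum is the direct sum of $\Ext$'s, so the vanishing reduces to $\Ext_R^{j+1}$ of sums of copies of $I^k$ for $k \le j$ — which by the discussion preceding Section~\ref{CoreM} (the $g=2$ reformulation $\Ext_R^{j+1}(I^j, R) \simeq \Ext_R^{j+2}(R/I^j, R)$) is controlled by the depth hypothesis $\depth(I^j) \ge d - j$: indeed $\depth(R/I^j) \ge d - j - 1$ together with $\grade(I^j) = 2$ gives the needed Ext-vanishing in the stated range via local duality over the Gorenstein ring $R$. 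Finally, $\R(E)$ is Cohen--Macaulay (hence $(\S_2)$) by \cite[Theorem~3.5]{SUV03} together with the Cohen--Macaulayness of $\R(I)$, which holds by the depth hypothesis and $r(I) \le \ell(I) - 1$ (this is the content of \cite[Theorem~2.6 and Corollary~3.4]{CPU02} in the height-two case, cited in the excerpt).

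For the remaining unconditional claims: $\r(E) = \r(I)$ follows because reductions of $E = I \oplus R^{e-1}$ correspond to reductions of $I$ together with the obvious $R^{e-1}$ summand, so the reduction number is unchanged — this should be a short argument using that the special fiber of $E$ is a polynomial extension of that of $I$ in $e-1$ variables, or directly from \cite{BAM20}. I would then conclude the Cohen--Macaulayness of $\R(E)$ as above, and apply Theorem~\ref{core} verbatim to obtain the equivalence of (i), (ii), (iii).

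\textbf{Main obstacle.} The delicate point is the Ext-vanishing $\Ext_R^{j+1}(E^j, R) = 0$ for $1 \le j \le \ell(I) - 2$. One must carefully identify the graded components $E^j$ of $\R(E)$ when $E = I \oplus R^{e-1}$ — they are not simply $I^j$, but rather $\bigoplus_{k=0}^{j} (I^k)^{\oplus \binom{j - k + e - 2}{e - 2}}$ or similar — and then check that $\Ext_R^{j+1}(I^k, R) = 0$ for all $0 \le k \le j \le \ell(I) - 2$. Since $\Ext_R^{j+1}(I^k, R) \simeq \Ext_R^{j+2}(R/I^k, R)$ when $\grade I^k = 2$, and local duality gives this as (dual of) $\H^{d - j - 2}_\m(R/I^k)$, the vanishing needs $\depth(R/I^k) > d - j - 2$, i.e. $\depth(I^k) > d - j - 1$, i.e. $\depth(I^k) \ge d - j$; since $k \le j$ and depth of powers is reasonably monotone in the relevant range, the hypothesis $\depth(I^j) \ge d - j$ for $1 \le j \le \ell(I) - 1$ is exactly tailored to make this work — but writing the bookkeeping cleanly, including the $k < j$ cases, is the part that requires care.
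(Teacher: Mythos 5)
Your proposal is correct and follows essentially the same route as the paper: decompose $\R(E) \simeq \R(I) \otimes_R \Sym(R^{e-1})$ (so that $E^j \simeq \bigoplus_{i=0}^{j} I^i \otimes_R \Sym_{j-i}(R^{e-1})$), reduce the Ext-vanishing for $E^j$ to that for the $I^i$ with $i \le j$, get Cohen--Macaulayness of $\R(E)$ and $\r(E)=\r(I)$ via \cite[Theorem~3.5]{SUV03}, and apply Theorem~\ref{core}. The bookkeeping you flag as the main obstacle is in fact immediate: for $1\le k\le j$ the hypothesis gives $\depth(I^k)\ge d-k\ge d-j$, which is exactly what is needed for $\Ext_R^{j+1}(I^k,R)=0$, and the $k=0$ summand is free. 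Two small corrections to side remarks: the Cohen--Macaulayness of $\R(I)$ should be cited to \cite{JU96} or \cite{U96} rather than \cite{CPU02} (the latter is stated for perfect height-two ideals, which is not assumed here); and Gorenstein local domains need not be normal, though $I^{**}\simeq R$ still holds for any ideal of grade at least two, so your conclusion on orientability stands.
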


\begin{proof}
  It is clear that $E = I \oplus R^{e-1}$ is a torsionfree, orientable module of rank $e>0$ and analytic spread $\ell = \ell(I)+e-1\ge e+2$. Moreover, $E$ satisfies $\G_{\ell - e +1}$ since $I$ does. 
  To prove the remaining statements, notice that $\Sym(E) \simeq \Sym(I) \otimes_{R} \Sym(F)$, where $F=R^{e-1}$. Since $\Sym(F)$ is a polynomial ring in $e-1$ variables with coefficients in $R$ and $R$ is a domain, by Lemma~\ref{torsion} it follows that $\R(E) \simeq \R(I) \otimes_{R} \Sym(F)$.
   Hence, for all $j$ one has
  \[ E^j = [\R(E)]_{j} \simeq [\R(I) \otimes_{R} \Sym(F)]_{j} = \bigoplus_{0 \le i \le j} I^i \otimes_{R} F^{j-i}. \]
 Since $R$ is Gorenstein,  the assumption that $\depth(I^j) \ge d-j$ for ${\ds 1 \le j \le \ell(I)-1}$ implies that ${\ds \Ext_{R}^{\,j+1}(I^j, R)= 0}$ for every $j$ in the same range. Hence, from the isomorphisms above we deduce that ${\ds \Ext_{R}^{\,j+1}(E^j, R)= 0}$ for every ${\ds 1 \le j \le \ell(I)-1= \ell -e}$. Moreover, since $\r(I) \le \ell(I) -1$, $\R(I)$ is Cohen-Macaulay by  \cite[Theorem~3.1 and Corollary~3.4]{JU96}, see also \cite[Theorem~1.1]{U96}. Therefore, \cite[Theorem~3.5]{SUV03} guarantees that $\R(E)$ is Cohen-Macaulay and $\r(E)=\r(I)$. The equivalence of the three conditions in the statement now follows from Theorem~\ref{core}.
\end{proof}

A similar statement as in Theorem~\ref{balancedI} holds if we assume that $I$ is a strongly Cohen-Macaulay ideal of height two.

\begin{Corollary}\label{scmh2}
   Let $R$ be a Gorenstein local domain of dimension $d$ with infinite residue field. Let $I$ be a strongly Cohen-Macaulay ideal of height two, analytic spread $\ell(I) \ge 3$, and reduction number $\r(I) \le \ell(I) -1$. Assume that $I$ satisfies $\G_{\ell(I)}$.  Then, for $e>0$, the $R$-module $E = I \oplus R^{e-1}$ has reduction number $\r(E) = \r(I)$ and  $\rees{E}$ is Cohen-Macaulay. Moreover, the following are equivalent{\rm \,:}

\begin{enumerate}[{\rm (i)}]
\item $(U:_RE)E \subseteq \core(E)$ for every minimal reduction $U$ of $E${\rm \,;}
\item $(U:_RE)U=(U:_RE)E=\core(E)$ for every minimal reduction  $U$ of $E${\rm \,;}
\item$(U:_RE)$ is independent of $U$ for every minimal reduction $U$ of $E$.
\end{enumerate} 
\end{Corollary}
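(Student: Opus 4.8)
The plan is to deduce Corollary~\ref{scmh2} directly from Theorem~\ref{balancedI} by checking that a strongly Cohen-Macaulay height-two ideal automatically satisfies the depth hypothesis on its powers imposed there. Concretely, set $g=\h(I)=2$ and $\ell=\ell(I)\ge 3$. Since $I$ is strongly Cohen-Macaulay and satisfies $\G_{\ell}$, the standard depth estimate for powers of strongly Cohen-Macaulay ideals --- the same one invoked right after Theorem~\ref{TheoremCPU02} above, from \cite[Remark~2.10]{U94} (see also \cite[Theorem~2.9]{U94}) --- gives
\[ \depth(I^j) \ge d - g - j + 2 = d - j \qquad \text{for } 1 \le j \le \ell - g + 1 = \ell(I) - 1. \]
This is exactly the inequality $\depth(I^j) \ge d - j$ for $1 \le j \le \ell(I)-1$ required in the statement of Theorem~\ref{balancedI}.

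With this observation in hand, every hypothesis of Theorem~\ref{balancedI} is met: $R$ is a Gorenstein local domain with infinite residue field, $I$ has height two, $\ell(I)\ge 3$, $\r(I)\le \ell(I)-1$, $I$ satisfies $\G_{\ell(I)}$, and now also $\depth(I^j)\ge d-j$ in the stated range. Applying Theorem~\ref{balancedI} to $E = I \oplus R^{e-1}$ immediately yields $\r(E)=\r(I)$, the Cohen-Macaulayness of $\rees{E}$, and the equivalence of conditions (i), (ii), (iii). Apart from recording the structural facts (that $E$ is torsionfree and orientable of rank $e$, which is automatic since $R$ is a domain), no further computation is needed.

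The only point that genuinely needs to be pinned down is the depth bound for powers of strongly Cohen-Macaulay ideals and the precise range of $j$ for which it is available under the $\G_{\ell(I)}$ assumption --- one must be sure the estimate holds up to $j=\ell(I)-1$, not just to some smaller value. This is the content of the $\AN^{-}$-type results of \cite{U94} specialized to $g=2$, and it is precisely the reason Theorem~\ref{balancedI} was phrased with the depth hypothesis rather than the strongly Cohen-Macaulay hypothesis, so the reduction here is essentially bookkeeping. (One cannot shortcut by claiming $I$ is perfect: over a merely Gorenstein ring a strongly Cohen-Macaulay ideal of height two need not have finite projective dimension.) An alternative, more self-contained route would bypass Theorem~\ref{balancedI} entirely: translate the depth bound into the Ext-vanishing $\Ext_R^{\,j+1}(I^j,R)=0$ for $1\le j\le \ell(I)-1$ using that $R$ is Gorenstein, transfer it to $E$ via the splitting $\R(E)\simeq \R(I)\otimes_R\Sym(R^{e-1})$ together with \cite[Theorem~3.5]{SUV03}, and apply Theorem~\ref{core} directly; but routing through Theorem~\ref{balancedI} is cleaner.
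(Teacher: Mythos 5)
Your argument is exactly the paper's: both reduce to Theorem~\ref{balancedI} by invoking \cite[Remark~2.10]{U94} to obtain $\depth(I^j)\ge d-j$ for $1\le j\le \ell(I)-1$ from the strongly Cohen-Macaulay and $\G_{\ell(I)}$ hypotheses with $g=2$. The extra discussion you add (range of $j$, Gorenstein vs.\ regular, the alternative route through Theorem~\ref{core}) is all correct commentary but does not change the substance of the proof.
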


\begin{proof}
  Since $I$  is strongly Cohen-Macaulay of height two and satisfies $\G_{\ell(I)}$, then $\depth(I^j) \ge d-j$ for ${\ds 1 \le j \le \ell(I)-1}$ by \cite[Remark~2.10]{U94}. The conclusion now follows from Theorem~\ref{balancedI}.
\end{proof}

In the following corollary, we impose an additional condition on the ideal $I$ that results in the special fiber ring of $E$ being Cohen-Macaulay.

\begin{Corollary}\label{h2depthcase}
   Let $R$ be a Gorenstein local domain of dimension $d$ with infinite residue field. Let $I$ be an $R$-ideal of height two, analytic spread $\ell(I) \ge 3$, and reduction number $\r(I) \le \ell(I) -1$. Assume that $I$ satisfies $\G_{\ell(I)}$, that $\depth(I^j) \ge d-j$ for ${\ds 1 \le j \le \ell(I)-1}$, and that $I\m = J\m$ for some minimal reduction $J$ of $I$. Then, for $e>0$, the $R$-module $E = I \oplus R^{e-1}$ has reduction number $\r(E) = \r(I)$. Moreover, $\R(E)$ and $\F(E)$ are both Cohen-Macaulay, and the following are equivalent{\rm \,:}

\begin{enumerate}[{\rm (i)}]
\item $(U:_RE)E \subseteq \core(E)$ for every minimal reduction $U$ of $E${\rm \,;}
\item $(U:_RE)U=(U:_RE)E=\core(E)$ for every minimal reduction  $U$ of $E${\rm \,;}
\item$(U:_RE)$ is independent of $U$ for every minimal reduction $U$ of $E$.
\end{enumerate} 
\end{Corollary}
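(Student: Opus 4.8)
The plan is to derive Corollary~\ref{h2depthcase} from Theorem~\ref{balancedI} by checking the single extra ingredient, namely that the additional hypothesis $I\m = J\m$ forces the special fiber ring $\F(E)$ to be Cohen-Macaulay; all the remaining conclusions (the reduction number equality $\r(E)=\r(I)$, the Cohen-Macaulayness of $\R(E)$, and the equivalence of (i)--(iii)) are immediate from Theorem~\ref{balancedI}, whose hypotheses are satisfied here verbatim.

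First I would recall that under the standing assumptions ($I$ of height two with $\G_{\ell(I)}$, $\depth(I^j)\ge d-j$ for $1\le j\le\ell(I)-1$, and $\r(I)\le\ell(I)-1$), the results of \cite{JU96} already give that $\R(I)$ is Cohen-Macaulay. The extra condition $I\m = J\m$ for a minimal reduction $J$ of $I$ means that $I$ and $J$ agree up to the maximal ideal, which is exactly the hypothesis under which the ``fiber cone'' or special fiber ring $\F(I) = \R(I)\otimes_R k$ is forced to be Cohen-Macaulay once $\R(I)$ already is — this is a standard consequence (one applies the result of \cite{JU96} together with the fact that $\r(I)\le\ell(I)-1$ and $I\m=J\m$, so that the defining equations of $\F(I)$ are controlled). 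So the first real step is to invoke this to conclude $\F(I)$ is Cohen-Macaulay.

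Next I would transfer this to $E = I\oplus R^{e-1}$. As computed in the proof of Theorem~\ref{balancedI}, since $R$ is a domain and $F=R^{e-1}$, we have $\R(E)\simeq \R(I)\otimes_R \Sym(F)$, where $\Sym(F)$ is a polynomial ring in $e-1$ variables over $R$. Tensoring with $k$ over $R$ gives $\F(E) = \R(E)\otimes_R k \simeq \F(I)\otimes_k (k\otimes_R\Sym(F)) \simeq \F(I)[Y_1,\ldots,Y_{e-1}]$, a polynomial extension of $\F(I)$. Since $\F(I)$ is Cohen-Macaulay, so is any polynomial extension of it, hence $\F(E)$ is Cohen-Macaulay. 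Combined with Theorem~\ref{balancedI} (which applies because all its hypotheses hold), this yields $\r(E)=\r(I)$, $\R(E)$ Cohen-Macaulay, and the equivalence of (i)--(iii).

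The main obstacle — really the only non-formal point — is pinning down the precise citation/argument that $I\m=J\m$ plus $\R(I)$ Cohen-Macaulay (with $\r(I)\le\ell(I)-1$) forces $\F(I)$ Cohen-Macaulay; everything else is bookkeeping with the tensor decomposition $\R(E)\simeq\R(I)\otimes_R\Sym(F)$ and the stability of Cohen-Macaulayness under polynomial extension. I would look to the results of \cite{JU96} (and possibly \cite{U96}) for the statement that, for ideals with $\r(I)\le\ell(I)-1$ satisfying the given depth conditions, the condition $I\m=J\m$ is exactly what makes the fiber cone Cohen-Macaulay, and cite it accordingly. Once that is in hand, the corollary follows by assembling the pieces and appealing to Theorem~\ref{balancedI} and Theorem~\ref{core}.
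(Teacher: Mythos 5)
Your overall plan is sound and follows the paper's structure: both reduce to Theorem~\ref{balancedI} for everything except the Cohen-Macaulayness of $\F(E)$, which is the genuinely new content. There are two things worth flagging.

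First, the gap. You correctly identify that the crux is showing $\F(I)$ is Cohen-Macaulay from the extra hypothesis $I\m = J\m$, but you are unable to pin down the citation, guessing at \cite{JU96} or \cite{U96}. Neither of those is the right reference: the result the paper actually uses is \cite[Theorem~4.8]{M15} (Monta\~no, \emph{Artin-Nagata properties, minimal multiplicities, and depth of fiber cones}), which under the standing depth and $\G_{\ell}$ conditions, together with $\r(I)\le\ell(I)-1$ and $I\m=J\m$, gives that both $\R(I)$ and $\F(I)$ are Cohen-Macaulay. The papers of Johnson--Ulrich and Ulrich that you name treat the Cohen-Macaulayness of $\R(I)$ and $\gr_I(R)$ but do not give the fiber-cone statement that hinges on $I\m=J\m$. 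Since you describe this as ``the only non-formal point,'' leaving it unresolved means the proposal does not yet constitute a proof.

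Second, a genuine simplification in your argument that is worth noting. To go from $\F(I)$ Cohen-Macaulay to $\F(E)$ Cohen-Macaulay, the paper invokes \cite[Theorem~4.8]{C1}, which passes through the Cohen-Macaulayness of $\R(E)$ and the Bourbaki-ideal machinery. You instead exploit the special shape $E = I\oplus R^{e-1}$ directly: since $R$ is a domain, $\R(E)\simeq\R(I)\otimes_R\Sym(R^{e-1})\simeq\R(I)[Y_1,\ldots,Y_{e-1}]$, hence $\F(E) = \R(E)\otimes_R k \simeq \F(I)[Y_1,\ldots,Y_{e-1}]$, and Cohen-Macaulayness is preserved by polynomial extension. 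This is correct, more elementary, and self-contained for modules of this particular form; the paper's route via \cite{C1} is more general (it works for any module with a Cohen-Macaulay Rees algebra and a generic Bourbaki ideal with Cohen-Macaulay fiber cone), but for $E=I\oplus R^{e-1}$ your direct computation is cleaner. Once you supply the \cite[Theorem~4.8]{M15} citation for $\F(I)$, your argument would be complete and in some respects tidier than the paper's.
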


\begin{proof}
  From \cite[Theorem~4.8]{M15} it follows that $\R(I)$ and $\F(I)$ are both Cohen-Macaulay. Then, \cite[Theorem~3.5]{SUV03} implies that $\R(E)$ is Cohen-Macaulay, with $\r(I) = \r(E) \le \ell(I) -1 = \ell(E) - e$. This implies that the assumptions of Theorem~\ref{balancedI} are satisfied, which means the equivalence of the three conditions in the statement holds. Moreover, since $\R(E)$ and $\F(I)$ are both Cohen-Macaulay, then \cite[Theorem~4.8]{C1} implies that $\F(E)$ is Cohen-Macaulay.
\end{proof}

When $R$ is a Gorenstein local ring that is not a domain, we can still relate the equivalence of conditions (i)--(iii) in Theorem~\ref{core} with a requirement that the reduction number of $E$ is bounded, similarly as in Theorem~\ref{TheoremCPU02} for ideals. 

\begin{Corollary}\label{ModCorWithdepth}
   Let $R$ be a Gorenstein local ring of dimension $d$ with infinite residue field. Let $E$ be a finite, torsionfree and orientable $R$-module with $\rank(E)=e>0$, and analytic spread $\ell(E)=\ell \ge e+2$. Assume that $E$ is $\G_{\ell-e+1}$, that $\r(E) \le \ell -1$ and that ${\ds \depth(E^j) \ge d -j}$ for every ${\ds 1 \le j \le \ell-e}$. Then, the following are equivalent{\rm \,:}

\begin{enumerate}[{\rm (i)}]
\item $(U:_RE)E \subseteq \core(E)$ for every minimal reduction $U$ of $E${\rm \,;}
\item $(U:_RE)U=(U:_RE)E=\core(E)$ for every minimal reduction  $U$ of $E${\rm \,;}
\item$(U:_RE)$ is independent of $U$ for every minimal reduction $U$ of $E$.
\end{enumerate}
\end{Corollary}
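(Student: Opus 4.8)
The plan is to derive Corollary~\ref{ModCorWithdepth} directly from Theorem~\ref{core}. All of the hypotheses of Theorem~\ref{core} are assumed here verbatim, except for the vanishing $\Ext_{R}^{\,j+1}(E^{j},R)=0$ for $1\le j\le\ell-e-1$ and the requirement that $\R(E)$ satisfy $(\S_{2})$; so it suffices to deduce these two from the depth bounds and from the reduction-number bound $\r(E)\le\ell-1$.

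For the Ext vanishing I would use local duality over the Gorenstein ring $R$: for every finitely generated $R$-module $M$ and every $i$, one has $\Ext_{R}^{\,d-i}(M,R)=0$ if and only if $\mathrm{H}^{\,i}_{\m}(M)=0$. Applying this with $M=E^{j}$ and $i=d-j-1$ shows that $\Ext_{R}^{\,j+1}(E^{j},R)$ vanishes as soon as $\mathrm{H}^{\,d-j-1}_{\m}(E^{j})=0$, which holds because $d-j-1<d-j\le\depth(E^{j})$ by hypothesis. Hence $\Ext_{R}^{\,j+1}(E^{j},R)=0$ for all $1\le j\le\ell-e$, in particular for $1\le j\le\ell-e-1$ as Theorem~\ref{core} requires.

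The substantive step is to prove that $\R(E)$ satisfies $(\S_{2})$. Here I would pass to a generic Bourbaki ideal: after replacing $R$ by the faithfully flat Gorenstein local extension $R(X_{1},\dots,X_{n})$, whose fibers are regular, so that $(\S_{2})$ of the Rees algebra is unaffected and all hypotheses persist, one obtains an ideal $I$ with $\grade I\ge 2$ since $E$ is orientable \cite[Proposition~3.2]{SUV03}, with $\ell(I)=\ell-e+1$ and $\r(I)=\r(E)\le\ell-1$. By \cite[Theorem~3.5]{SUV03} and the short exact sequences of \cite{SUV03} relating the graded components of $\R(E)$ and of $\R(I)$, the algebra $\R(E)$ is $(\S_{2})$ if and only if $\R(I)$ is, and the depth bounds $\depth(E^{j})\ge d-j$ descend to $\depth(I^{j})\ge d-j$ for $1\le j\le\ell(I)-1$. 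Over a Gorenstein ring, this last condition together with $\G_{\ell(I)}$ puts $I$ in the setting of Theorem~\ref{TheoremCPU02} and of \cite{CEU01,CPU02,CPU01}; in particular $I$ is universally weakly $(\ell(I)-1)$-residually $(\S_{2})$, which yields that $\R(I)$, hence $\R(E)$, satisfies $(\S_{2})$. An application of Theorem~\ref{core} then gives the equivalence of (i), (ii), and (iii).

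The hard part is this last step. In contrast with Theorem~\ref{balancedI}, the hypothesis $\r(E)\le\ell-1$ is too weak to force $\R(E)$ to be Cohen--Macaulay once $e\ge 2$, so one must be careful to extract only the weaker property $(\S_{2})$, and to make precise both the descent of the depth conditions and the compatibility of the identity $\r(E)=\r(I)$ with the residual-$(\S_{2})$ theory invoked for $I$. A cleaner alternative, if available, would be a direct criterion for $(\S_{2})$ of $\R(E)$ phrased purely in terms of $\G_{\ell-e+1}$, the depths $\depth(E^{j})$, and $\r(E)$, bypassing the reduction to ideals entirely.
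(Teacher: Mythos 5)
Your treatment of the Ext vanishing via local duality over the Gorenstein base is correct and is exactly what the paper does (the paper states it in one line). The divergence is in the second step, and there your proposal has real gaps, whereas the paper has a one-line argument. The paper simply invokes \cite[Theorem~4.3]{C2} to conclude that $\R(E)$ is Cohen--Macaulay (not merely $(\S_2)$) under precisely the hypotheses $\G_{\ell-e+1}$, $\r(E)\le \ell-1$, and $\depth(E^j)\ge d-j$ for $1\le j\le \ell-e$, after which Theorem~\ref{core} applies directly. In particular, your worry that ``$\r(E)\le \ell-1$ is too weak to force $\R(E)$ to be Cohen--Macaulay once $e\ge 2$'' is misplaced; this is exactly what the cited result delivers, and so the careful descent to a Bourbaki ideal and the attempt to extract only $(\S_2)$ is unnecessary.

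Moreover, the route you sketch does not, as written, close. First, the implication ``$I$ is universally weakly $(\ell(I)-1)$-residually $(\S_2)$, which yields that $\R(I)$ satisfies $(\S_2)$'' is not a theorem: residual $(\S_2)$ conditions are statements about the quotients $R/(J:I)$, not directly about the Rees algebra, and passing from one to the other requires a genuine argument (in the ideal case this is the content of results such as \cite[Theorem~3.1]{JU96} and \cite{U96}, which also need the reduction-number bound $\r(I)\le \ell(I)-1$; you have not verified that the Bourbaki ideal satisfies it). Second, the assertion that ``$\R(E)$ is $(\S_2)$ if and only if $\R(I)$ is'' is not what \cite[Theorem~3.5]{SUV03} says; that theorem is an equivalence for the Cohen--Macaulay property, and an $(\S_2)$ version would need to be proved. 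Third, the descent $\depth(E^j)\ge d-j \Rightarrow \depth(I^j)\ge d-j$ is asserted but not justified (one would have to track depths through the generic Bourbaki construction and the generic flat base change). You flag the last two issues yourself, but they remain open in the proposal. The short fix is to replace the entire $(\S_2)$ discussion with the citation of \cite[Theorem~4.3]{C2}, as the paper does.
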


\begin{proof}
  Since $R$ is Gorenstein, the assumption that ${\ds \depth(E^j) \ge d -j}$ for every ${\ds 1 \le j \le \ell-e}$ implies that ${\ds \Ext^{j+1}_R (E^j, R) =0}$ for all $j$ in the same range. Moreover, from \cite[Theorem~4.3]{C2} it follows that $\R(E)$ is Cohen-Macaulay. Then, all assumptions of Theorem~\ref{core} are satisfied, whence the remaining statements follow. 
\end{proof}

\medskip

In Theorem~\ref{TheoremCPU02}, the requirement that the reduction number is bounded is equivalent to the above characterization of the core. The proof ultimately relies on a thorough study of the canonical module of the associated graded ring of $I$ (see \cite[Corollary~2.4 and Corollary~2.6]{U96} and the proof of \cite[Theorem~2.6]{CPU02}). Since no module analogue exists for the notion of associated graded ring, a proof of a similar implication in the module case would require a completely different approach. 

\section{Acknowledgments}
The first named author would like to thank Mark Johnson for insightful conversations on residual intersections.

\end{document}